\title[Mean field games model of wealth]{Existence theory for a time-dependent mean field games model of household 
wealth}
\author{David M. Ambrose}
\address{3141 Chestnut St., Department of Mathematics, Drexel University, Philadelphia, PA 19104 USA}
\email{dma68@drexel.edu}
\newtheorem{theorem}{Theorem}
\newtheorem{lemma}[theorem]{Lemma}
\newtheorem{remark}[theorem]{Remark}
\begin{document}

\begin{abstract} We study a nonlinear system of 
partial differential equations arising in macroeconomics which utilizes a mean
field approximation.  This system together with the corresponding data, subject to two moment constraints, 
is a model for debt and wealth across a large number of similar households, and
was introduced in a recent paper of Achdou, Buera, Lasry, Lions, and Moll.  We introduce a relaxation of their problem,
generalizing one of the moment constraints; any solution of the original model is a solution of this relaxed problem.
We prove existence and uniqueness of strong solutions to the relaxed problem, under the assumption that the time
horizon is small.  Since these solutions are unique and since solutions of the original problem are also solutions of the
relaxed problem, we conclude that if the original problem does have solutions, then such solutions 
must be the solutions we prove to exist.  Furthermore, for some data and for sufficiently small time horizons,
we are able to show that solutions of the relaxed problem are in fact not solutions of the original problem.
In this way we demonstrate nonexistence of solutions for the original problem in certain cases.
\end{abstract}

\maketitle

\section{Introduction}

A recent paper of Achdou, Buera, Lasry, Lions, and Moll calls attention to PDE models in macroeconomics;
we study a model proposed there for the distribution of wealth across many similar households \cite{mollPhilTrans}.
In this model, the independent variables are $a,$ wealth, $z,$ income, and $t,$ time.  Each household of a given
wealth and income must decide how much of their income to put towards consumption and how much to instead
save.  Note that wealth and savings can be positive or negative, representing debt for negative values.
The authors make a mean field assumption in the modeling, so that a representative household is seen as interacting
not with all the many other individual households, but only with the aggregation of these.  In addition to introducing
the model, the authors of \cite{mollPhilTrans} work with stationary solutions and state that existence and uniqueness of
time-dependent solutions is an open problem.  The present work gives the first theory of existence and uniqueness
for time-dependent solutions.

The particular nonlinear PDE model from \cite{mollPhilTrans} is given by the two equations
\begin{equation}\label{vEquation}
\partial_{t}v+\frac{1}{2}\sigma^{2}(z)\partial_{zz}v+\mu(z)\partial_{z}v+(z+r(t)a)\partial_{a}v
+H(\partial_{a}v)-\rho v=0,
\end{equation}
\begin{equation}\label{gEquation}
\partial_{t}g-\frac{1}{2}\partial_{zz}(\sigma^{2}(z)g)+\partial_{z}(\mu(z)g)
+\partial_{a}((z+r(t)a)g)+\partial_{a}(gH_{p}(\partial_{a}v))=0.
\end{equation}
The dependent variables are $g,$ the distribution of households, and $v,$ the present discounted
value of future utility derived
from consumption; the discount rate is $\rho.$
The nonlinear function $H$ is the Hamiltonian for the problem and is related to a given utility function, $u;$ the
specific form of $H$ is given below in Section \ref{formulationSection}.
We consider the $z$ variable to be taken from the doman $[z_{\mathrm{min}},z_{\mathrm{max}}],$ and the 
$a$ variable to be taken from $\mathbb{R}.$  
The function $\sigma\geq0$ is a diffusion coefficient and the function $\mu$ is a transport coefficient.  We take these
to be smooth and to satisfy $\sigma(z_{min})=\sigma(z_{max})=0$ and $\mu(z_{min})=\mu(z_{max})=0,$ so there
is no transport or diffusion through the boundary of the domain.  The interest rate $r(t)$ is not given but instead depends
on the unknowns; determining $r$ will be a major focus of the present work.
The model is based on models appearing previously in the economics literature 
\cite{aiyagari}, \cite{bewley}, \cite{huggett}.

Our choice of domain with respect to the $a$ variable is a different from \cite{mollPhilTrans}, in which the $a$ variable
was taken from the semi-infinite interval $[a_{\mathrm{min}},\infty)$ for a given value $a_{min}<0.$
The theorem we prove will be for compactly supported distributions $g,$ and thus our theorem is consistent
with \cite{mollPhilTrans} with respect to the spatial domain as long as $a_{min}$ is taken to be beyond the edge of
the support of our $g,$ especially at the initial time.
At the end, in Section \ref{discussionSection}, we will discuss further the 
restriction of our solutions to the domain given in \cite{mollPhilTrans}.

We have two moment conditions which must be satisfied:
\begin{equation}\label{gZeroMoment}
\int g \ dadz =1,
\end{equation}
\begin{equation}\label{gFirstMoment}
\int ag\ dadz =0.
\end{equation}
Of course condition \eqref{gZeroMoment} simply expresses that $g$ is a probability measure.
On the other hand \eqref{gFirstMoment} is an equilibrium condition which expresses that the system is closed
in the sense that all money available to be borrowed in the system is in fact borrowed, and conversely all money
borrowed in the system comes from within the system.  Restated, condition \eqref{gFirstMoment} expresses
that households with negative wealth have borrowed from households with positive wealth, that households with
positive wealth have lent to households with negative wealth, and these total amounts borrowed and lent balance
with each other.  It is from the condition \eqref{gFirstMoment} that the interest rate, $r(t),$ is to be determined.

The equation \eqref{vEquation} for $v$ is backward parabolic, while the equation \eqref{gEquation} for $g$ is
forward parabolic; this is the typical situation for mean field games.  We therefore specify initial data $g_{0}$ for
$g,$ giving an initial distribution of households, and terminal data $v_{T}$ for $v,$ giving a final utility function.

We actually are not able to fully solve the problem specified by \eqref{vEquation},
\eqref{gEquation}, \eqref{gZeroMoment}, \eqref{gFirstMoment}, with the accompanying data; rather than 
being a defect of our method, we are able to prove in some cases that this problem does not have a solution.
In \cite{mollPhilTrans}, the authors did not indicate that a general terminal condition $v_{T}$ should be specified,
but instead indicated a particular choice: that $T$ should be taken to be large and that $v_{T}$ should be 
associated to a stationary solution of the system.  We will discuss this proposed restriction on the data further
in our concluding section, Section \ref{discussionSection} below.

Another condition was stated in \cite{mollPhilTrans}, which is related to their choice of the spatial domain
with respect to the $a$ variable being $[a_{min},\infty).$  Since the equations \eqref{vEquation}, \eqref{gEquation}
include transport terms with respect to $a,$ a boundary condition at $a=a_{min}$ must be carefully given.
This is the ``state constraint boundary condition'' of \cite{mollPhilTrans}, which indicates that the relevant
characteristics point into the domain; such boundary conditions for transport equations have been developed by
Feller \cite{feller57}.  The existence of the boundary at $a_{min}$ is a modeling decision, stating that 
lenders will no longer lend to households with debt of $a_{min};$ the state constraint boundary condition then 
implies that for these households, their incomes are necessarily high enough that in the absence of further borrowing,
their debt load will not increase from the accumulating interest.  By considering compactly supported solutions 
and taking the support to be away from a given value of $a_{min},$ we obviate the need for any such state constraint
boundary condition.  Furthermore, with our compactly supported distribution $g,$ our solutions feature
a maximum and minimum wealth at each time, but these maximum and minimum values are not fixed in time.  

The system \eqref{vEquation}, \eqref{gEquation} is an example from the realm of mean field games, which
have been introduced by Lasry and Lions \cite{lasryLions1}, \cite{lasryLions2}, \cite{lasryLions3}, 
and also by Caines, Huang, and Malhame \cite{huang1}, \cite{huang2}, to study problems in game
theory with a large number of similar agents.  Existence theory for such systems has been developed by several
authors \cite{degenerateMFG}, \cite{cirantSobolev}, \cite{gomesSuper}, \cite{gomesLog}, \cite{gomesSub},
\cite{porretta1}, \cite{porretta2}, \cite{porretta3},
but the system \eqref{vEquation}, \eqref{gEquation} does not fall readily into any previously developed
existence theory for two main reasons.  First, some existence theory such as that of the author relies strongly
on the presence of parabolic effects \cite{ambroseMFG1}, \cite{ambroseMFG2}, \cite{ambroseMFG3}, 
but in \eqref{vEquation}, \eqref{gEquation} the diffusion is anisotropic and
cannot be used to bound derivatives with respect to the $a$ variable.  Second, many of these
works assume structure on the nonlinearity, especially additive separability into a part which depends on $v$
and a part which depends on $g,$ and this separability is not present here.  Instead, the unknowns interact
through the interest rate $r(t),$ and this multiplies other terms in the equations.

The author's prior works \cite{ambroseMFG1}, \cite{ambroseMFG2},
\cite{ambroseMFG3} could be described as viewing the mean field games system as a coupled pair of nonlinear
heat equations.  With the anisotropic effects, we now take the view instead that \eqref{vEquation}, \eqref{gEquation}
form a coupled pair of nonlinear transport equations.  Otherwise, once we have reformulated the system appropriately,
the method used to prove existence and uniqueness of solutions is broadly similar to that of the author's prior
work \cite{ambroseMFG3}; this is the energy method, but adapted to the forward-backward setting of mean field games.

The plan of the paper is as follows: in Section \ref{formulationSection} we make some reformulation of the problem,
changing to a more convenient variable than $v.$  In Section \ref{interestRateSection} we take care to discuss how the
interest rate $r(t)$ is calculated, introducing a modification of the original problem.  In Section \ref{iterativeSection}
we set up an approximation scheme for solving our modified problem.  In Section \ref{existenceAndBounds} we 
prove that our approximate problems have solutions, and develop bounds for the solutions which are uniform in the
approximation parameters.  We pass to the limit to find solutions of our modified problem in Section \ref{limitSection},
to complete our existence proof.  We then prove uniqueness of these solutions in Section \ref{uniquenessSection}.
Finally, we make some concluding remarks in Section \ref{discussionSection}, 
including pointing out that our existence theory for the modified problem
demonstrates that the original problem in some cases in fact has no solution.  Our main theorems are
Theorem \ref{mainExistenceTheorem} in Section \ref{limitSection}, which establishes existence of solutions 
to our modified problem, and Theorem \ref{mainUniquenessTheorem} in Section \ref{uniquenessSection}, 
which establishes uniqueness of these solutions.

\section{Formulation}\label{formulationSection}

We have the Hamiltonian satisfying 
\begin{equation}\nonumber
H(p)=\max_{c\geq0}\left(-cp+u(c)\right),
\end{equation}
where $u$ is a given consumer utility function.  Since $u$ is a consumer utility function, standard economic 
assumptions are
that $u'(c)>0$ for all $c$ and $u''(c)<0$ for all c.  For simplicity, we take $u$ to be infinitely smooth away from $c=0,$
and we also assume for simplicity that the range of $u'$ is 
$(0,\infty)$ and thus the domain of $(u')^{-1}$ is also $(0,\infty).$   We will comment briefly on the general case, 
in our concluding remarks in Section \ref{discussionSection}.

Doing some calculus we see that $-cp+u(c)$ is maximized when $p=u'(c),$ so
we may rewrite $H$ as
\begin{equation}\nonumber
H(p)=-p(u')^{-1}(p)+u((u')^{-1}(p)).
\end{equation}
We may then also calculate $H_{p},$ which is given by the formula
\begin{equation}\nonumber
H_{p}(p)=-(u')^{-1}(p)-\frac{p}{u''((u')^{-1}(p))}+\frac{p}{u''((u')^{-1}(p))}=-(u')^{-1}(p).
\end{equation}
Since we have taken $u$ to be smooth, we see that $H$ and $H_{p}$ inherit this smoothness.

The above calculation requires $p>0;$ if instead $p\leq0,$ then there is no maximum, and the Hamiltonian
would have the value $+\infty.$  To restrict to $p>0$ we must take $\partial_{a}v>0,$ and thus it is convenient
to change variables to $w=\partial_{a}v$ and seek positive solutions for $w.$  We furthermore wish to have compactly
supported solutions, and this is not possible with the condition we have just stated, that $w>0$ on the whole domain.
So, we introduce $y=w-f(t)w_{\infty}$ for some positive constant $w_{\infty},$ and we require $y$ to be smooth and
compactly supported.  We will likewise require $g$ to be compactly supported.

We let $y=\partial_{a}v-f(t)w_{\infty},$ and seek a favorable choice of the function $f(t).$
We need to determine the equation satisfied by $y$ and also to choose our $f.$  To this end, we begin by
differentiating \eqref{vEquation} with respect to $a$
\begin{multline}\label{vaEquation}
\partial_{t}(\partial_{a}v)+\frac{1}{2}\sigma^{2}(z)\partial_{zz}(\partial_{a}v)+\mu(z)\partial_{z}(\partial_{a}v)
\\
+r(t)\partial_{a}v+(z+r(t)a)\partial_{a}(\partial_{a}v)+H_{p}(\partial_{a}v)\partial_{a}(\partial_{a}v)-\rho\partial_{a}v
=0.
\end{multline}
To each $\partial_{a}v$ appearing on the right-hand side, we add and subtract $f(t)w_{\infty}.$
We find the following evolution equation for $y:$
\begin{multline}\nonumber
\partial_{t}y + f'(t)w_{\infty} + \frac{1}{2}\sigma^{2}(z)\partial_{zz}y + \mu(z)\partial_{z}y + r(t)y 
+r(t)f(t)w_{\infty} 
\\
+ (z+r(t)a)\partial_{a}y + \Theta(y,f)\partial_{a}y -\rho y - \rho f(t)w_{\infty}=0.
\end{multline}
Here we have introduced $\Theta$ to be the function given by
\begin{equation}\nonumber
\Theta(y,f)=H_{p}(y+fw_{\infty}).
\end{equation}
We choose $f$ such that
\begin{equation}\label{fFinalEquation}
f'(t)+r(t)f(t)-\rho f(t) =0;
\end{equation}
note that this is a simple ordinary differential equation which may be solved with an integrating factor.
We also must specify a terminal condition for $f,$ and we take $f(T)=1.$
This choice leaves the equation for $y$ as
\begin{equation}\label{yFinalEquation}
\partial_{t}y+\frac{1}{2}\sigma^{2}(z)\partial_{zz}y+\mu(z)\partial_{z}y
+(r(t)-\rho)y+(z+r(t)a+\Theta(y,f))\partial_{a}y = 0.
\end{equation}
In terms of $y$ and $f,$ and thus also in terms of $\Theta,$ our equation for $g$ is
\begin{equation}\label{gFinalEquation}
\partial_{t}g-\frac{1}{2}\partial_{zz}(\sigma^{2}(z)g)+\partial_{z}(\mu(z)g)+\partial_{a}((z+r(t)a+\Theta(y,f))g)
=0.
\end{equation}

\section{Determining the interest rate, and a relaxed problem}\label{interestRateSection}

In this section we explore the nature of the coupling between the $v$ equation \eqref{vEquation} and the
$g$ equation \eqref{gEquation}.  We will proceed first in terms of $v,$ and then summarize in terms of our 
new variable $y.$  As stated in \cite{mollPhilTrans}, the coupling is through the interest rate, $r(t),$ and this
interest rate is determined through the moment condition \eqref{gFirstMoment}.  

We proceed with our first calculation on this point, which we expect is what was intended in \cite{mollPhilTrans}.
We assume that \eqref{gFirstMoment} is satisfied by the data $g_{0}.$

Call $\mathcal{C}=\int\int ag\ dadz.$  Then we differentiate $\mathcal{C}$ with respect to time:
\begin{multline}\nonumber
\mathcal{C}_{t}=\int\int\frac{a}{2}\partial_{zz}(\sigma^{2}g)\ dadz 
-\int\int a\partial_{z}(\mu g)\ dadz
\\
-\int\int a\partial_{a}((z+ra)g)\ dadz 
-\int\int a\partial_{a}(H_{p} g)\ dadz.
\end{multline}
By assumptions on the diffusion and drift coefficients $\sigma$ and $\mu,$ the first and second terms on the right-hand 
side vanish.  For the third and fourth terms on the right-hand side, we integrate by parts:
\begin{multline}\nonumber
\mathcal{C}_{t}=-\int a(z+ra)g\Bigg|_{a_{min}}^{a=\infty}\ dz + \int\int(z+ra)g\ dadz
\\
-\int a H_{p}g\Bigg|_{a_{min}}^{a=\infty}\ dz+\int\int H_{p}g\ dadz.
\end{multline}
Because of our assumption of compact support with respect to $a$ in $(a_{min},\infty),$ the first and third terms
on the right-hand side also vanish.  This leaves us with 
\begin{equation}\label{unfortunateR}
\mathcal{C}_{t}-r(t)\mathcal{C}=\mathcal{Q},
\end{equation}
with the quantity $\mathcal{Q}$ defined by $\mathcal{Q}=\int\int (z+H_{p})g\ dadz.$

Unfortunately this is a difficulty, as it is unclear from this how to determine $r$ from \eqref{unfortunateR}.
That is, if we believe that $r$ will enforce $\mathcal{C}=0,$ then we must have $\mathcal{C}_{t}=0$ as well,
and then \eqref{unfortunateR} tells us that $\mathcal{Q}$ must equal zero as well.  However this would not
tell us what the interest rate is actually equal to.  Worse yet, there is no reason to believe at present that $\mathcal{Q}$
would equal zero.  We deal with this difficulty by generalizing the problem.  Instead of seeking solutions for which
$\mathcal{C}=0,$ we now will determine the interest rate by insisting $\mathcal{Q}_{t}=0.$

\begin{remark}
Note that if $g|_{a_{min}}\neq0,$ then there would be another term proportional to $r$ in \eqref{unfortunateR}.
It would then be possible to choose a value of $r$ to cancel the $\mathcal{Q}$ term.  
\end{remark}

As we have just said, the condition $\mathcal{C}=0$ does indeed imply 
$\mathcal{Q}=0$ and thus $\mathcal{Q}_{t}=0.$  Thus solutions of the original problem ($\mathcal{C}=0$)
also solve the relaxed problem ($\mathcal{Q}_{t}=0$).  In the other direction, if we have a solution
of the relaxed problem, since $\mathcal{Q}_{t}=0$ we have $\mathcal{Q}=\mathcal{Q}_{0}$ for all $t.$  If 
$\mathcal{Q}_{0}=0$ and if $\mathcal{C}(0)=0,$ then we may conclude that $\mathcal{C}=0$ after all.
If however $\mathcal{Q}_{0}\neq0$ and if $\mathcal{C}(0)=0,$ then we see that $\mathcal{C}_{t}(0)\neq0$ and
thus $\mathcal{C}$ is not identically zero.  

We will be proving existence and uniqueness of solutions for the relaxed problem.  Thus if there is a solution of the
original problem, then it must be the solution we prove to exist.  We will in some cases be able to guarantee that
in fact $\mathcal{Q}_{0}\neq0,$ and thus in these cases, the original problem does not have a solution.

Now that we are considering the relaxed problem, we return our attention to determination of the interest rate.
Taking the time derivative of $\mathcal{Q},$ we have
\begin{multline}\nonumber
\mathcal{Q}_{t}=\int\int z\partial_{t}g\ dadz + 
\int\int H_{pp}(\partial_{a}v)(\partial_{t}\partial_{a}v)g\ dadz
\\
+\int\int H_{p}(\partial_{a}v)(\partial_{t}g)\ dadz :=Q_{1}+Q_{2}+Q_{3}.
\end{multline}
For each of these terms, we decompose into a part which explicitly involves $r$ and a piece which does not:
\begin{eqnarray}
\label{Q1Equation}
Q_{1} & = & P_{1} - \int\int z\partial_{a}\left((z+r(t)a)g\right)\ dadz,\\
\label{Q2Equation}
Q_{2} & = & P_{2} - \int\int g(H_{pp}(\partial_{a}v))\partial_{a}\left((z+r(t)a)\partial_{a}v\right)\ dadz,\\
\label{Q3Equation}
Q_{3} & = & P_{3} - \int\int (H_{p}(\partial_{a}v))\partial_{a}\left((z+r(t)a)g\right)\ dadz,
\end{eqnarray} 
where
\begin{equation}\nonumber
P_{1}=-\int\int z\partial_{z}(\mu(z)g)\ dadz,
\end{equation}
\begin{equation}\nonumber
P_{2}=\int\int gH_{pp}(\partial_{a}v)\left(-\frac{\sigma^{2}(z)}{2}\partial_{zz}(\partial_{a}v)
-\mu(z)\partial_{z}(\partial_{a}v)-H_{p}(\partial_{a}v)\partial_{a}(\partial_{a}v)+\rho\partial_{a}v\right)\ dadz,
\end{equation}
\begin{equation}\nonumber
P_{3}=\int\int H_{p}(\partial_{a}v)\left(\frac{1}{2}\partial_{zz}(\sigma^{2}(z)g)-\partial_{z}(\mu(z)g)
-\partial_{a}(gH_{p}(\partial_{a}v))\right)\ dadz.
\end{equation}
We first notice that, because of the compact support with respect to $a$ in $(a_{min},\infty),$ the integral
on the right-hand side of \eqref{Q1Equation} is equal to zero.  We apply the derivative in the integral on the
right-hand side of \eqref{Q2Equation}, and we integrate by parts in \eqref{Q3Equation}:
\begin{multline}\label{Q2last}
Q_{2}  =  P_{2} - r(t)\int\int g(H_{pp}(\partial_{a}v))\partial_{a}v\ dadz\\
- \int\int g(H_{pp}(\partial_{a}v))(z+r(t)a)\partial_{a}^{2}v\ dadz,
\end{multline}
\begin{equation}\label{Q3last}
Q_{3}  =  P_{3} + \int\int (H_{pp}(\partial_{a}v))(\partial_{a}^{2}v)(z+r(t)a)g\ dadz.
\end{equation}
We introduce the notation $P=P_{1}+P_{2}+P_{3},$ and 
\begin{equation}\nonumber
K=\int\int g(H_{pp}(\partial_{a}v))\partial_{a}v\ dadz.
\end{equation}
Then adding $Q_{1},$ $Q_{2},$ and $Q_{3}$ back together again, we find
\begin{equation}\nonumber
\mathcal{Q}_{t}=P-r(t)K;
\end{equation}
to arrive at this, notice that there is a cancellation when adding \eqref{Q2last} and \eqref{Q3last}.
We therefore have concluded that we may determine $r(t)$ in the relaxed problem by
\begin{equation}\nonumber
r(t)=\frac{P}{K}.
\end{equation}
(Note that both $P$ and $K$ depend on time.)

For this to be a complete description of the determination of the interest rate, we must do two further things.
First, we remark that it is clear that $K$ is nonzero.  Since $H_{p}(p)=-(u')^{-1}(p)$ and since $u'$ is strictly decreasing, 
we see that $H_{pp}(p)>0$ always.  As discussed above, we are only considering solutions for which $\partial_{a}v>0.$
Together with the fact that $g$ is a probability distribution, we have $K>0.$  We will still, however, need to 
control $K$ to ensure that it cannot get arbitrarily small.  Finally, we give an explicit formula for $P,$
in terms of $y$ and $f$ rather than $\partial_{a}v:$ 
\begin{multline}\label{definitionOfP}
P=P[y,f,g]=-\int\int z\partial_{z}(\mu(z)g)\ dadz
\\
+\int\int gH_{pp}(y+fw_{\infty})\left(-\frac{1}{2}\sigma^{2}\partial_{zz}y-\mu\partial_{z}y-H_{p}(y+fw_{\infty})\partial_{a}y
+\rho (y+fw_{\infty})\right)\ dadz\\
+\int\int H_{p}(y+fw_{\infty})\left(\frac{1}{2}\partial_{zz}(\sigma^{2}g)-\partial_{z}(\mu g)-\partial_{a}(gH_{p}(y+fw_{\infty}))
\right)\ dadz.
\end{multline}

\section{Iterative scheme}\label{iterativeSection}

We will prove our existence theorem using an iterative scheme, and we will now set up this scheme. 
 
We fix $s\in\mathbb{N}$ such that $s\geq 4;$ we will provide some further comments on this later.
Let $A>0$ be given.  We let 
$A_{1}=[-A,A],$ $A_{2}=[-2A,2A],$ and $A_{3}=[-3A,3A].$  We let $\chi$ be such that $\chi\in C^{\infty}(\mathbb{R}),$
such that $\chi(a)=1$ for $a\in A_{2},$ such that $\chi(a)=0$ for $a\in A_{3}^{c},$ and such that on each component of
$A_{3}\setminus A_{2},$
$\chi$ is smooth and monotone.  For all $a\in \mathbb{R},$ we then have $|\chi(a)a|\leq 3A.$
We will henceforth work in the spatial domain which we denote by $D,$ which is $D=A_{3}\times[z_{min},z_{max}].$

Let data $g_{0}\in H^{s}(D)$ and 
$y_{T}\in H^{s+1}(D)$ be given, such that the support of $g_{0}$ with respect to $a$ is contained in the interior of
$A_{1}$ and the support of $y_{T}$ with respect to $a$ is contained in the interior of $A_{1}.$
We initialize our scheme with $g^{0}=g_{0,\delta}$ and $y^{0}=y_{T,\delta}.$  
Here, for small parameter values 
$\delta>0,$ we have taken a $C^{\infty}$ function $g_{0,\delta}$ and a $C^{\infty}$ function $y_{T,\delta}$
to be within $\delta$ of $g_{0}$ in $H^{s}(D)$ and within $\delta$ of $y_{T}$ in $H^{s+1}(D),$ respectively. 
As we have assumed that $g_{0}$ and $y_{T}$ are 
each supported in the interior of $A_{1}$ with respect to the $a$ variable, we may take
our approximations to also be supported in this set with respect to $a.$  That our data can be approximated in this
way follows from standard density results \cite{adams}.

The solutions of our iterated system will
actually depend on both $n$ and $\delta$ and would more properly be called $y^{n,\delta}$ and
$g^{n,\delta};$ we will suppress this $\delta$
dependence, however, for the time being, considering for now $\delta>0$ to be fixed, and we will call the iterates 
$y^{n}$ and $g^{n},$ and so on.
We take the function $f^{0}(t)=1$ for all $t,$ and we let
the initial interest rate be given as $r^{0}(t)=0$ for all $t.$   
We will still need to initialize $K.$

For our constant $w_{\infty}>0$ and the data $y_{T}$ we define
\begin{equation}\label{dataW}
W=\min_{(a,z)\in D}\left(y_{T}(a,z)+w_{\infty}\right),
\end{equation}
and we require that $W>0;$ this is the positivity condition for $\partial_{a}v.$
Noting that our terminal data in our approximate problems is 
not exactly equal to $y_{T}+w_{\infty},$ we also take $\delta$ sufficiently small so that
\begin{equation}\label{deltaData1}
\min_{(a,z)\in D}\left(y_{T,\delta}+w_{\infty}\right)\geq \frac{3W}{4}.
\end{equation}
We similarly define $K_{data}>0$ as
\begin{equation}\nonumber
K_{data}=\int\int g_{0}(H_{pp}(y_{T}+w_{\infty}))(y_{T}+w_{\infty})\ dadz.
\end{equation}
Note that $K_{data}$ is positive since $g_{0}$ is a probability distribution, since $H_{pp}>0$ (this sign is inherited
from properties of the utility function, $u$) and because we have taken $W>0.$
We need to initialize $K$ and use something like $K_{data},$ but adapted to the data for our approximate problems,
\begin{equation}\nonumber
K^{0}=\int\int g_{0,\delta}(H_{pp}(y_{T,\delta}+w_{\infty})(y_{T,\delta}+w_{\infty})\ dadz,
\end{equation}
and we may take $\delta$ sufficiently small so that
\begin{equation}\label{deltaData2}
K^{0}\geq \frac{3K_{data}}{4}.
\end{equation}

Having initialized our iteration scheme with initial iterates $y^{0}=y_{T,\delta}$ and 
$g^{0}=g_{0,\delta},$
the support of each of $y^{0}$ and $g^{0}$ with respect to $a$ is contained in $A_{1}$ and thus also in $A_{2}.$
We fix $M>1.$  We may take $\delta>0$ sufficiently small so that 
we also have the following bounds for $y^{0}$ and $g^{0}:$
\begin{equation}\nonumber
\sup_{t\in[0,T]}\|y^{0}(t,\cdot)\|^{2}_{H^{s+1}}
+\|g^{0}(t,\cdot)\|^{2}_{H^{s}}
\leq M\Big(\|y_{T}\|^{2}_{H^{s+1}}+\|g_{0}\|^{2}_{H^{s}}\Big).
\end{equation}
These two bounds, on the supports and on the norms, are features we will seek to maintain for all subsequent iterates.

We introduce another cutoff function, related to the fact that 
the function $H_{p}$ is only defined for positive arguments.  We have given the definition of $W>0$ above in
\eqref{dataW}.  We let $\psi:\mathbb{R}\rightarrow\mathbb{R}$ be a $C^{\infty}$ function which satisfies
$\psi(x)=x$ for $x\geq W/2,$ which satisfies $\psi(x)=W/4$ for $x\leq W/4,$ and which is monotone.
We define $\Theta_{c}$ by
\begin{equation}\nonumber
\Theta_{c}(y,f)=H_{p}(\psi(y+fw_{\infty})).
\end{equation}
It will be important later to note that if $y+fw_{\infty}\geq W/2,$ then $\Theta_{c}(y,f)=\Theta(y,f).$

We set up our iterative scheme, beginning with $g:$
\begin{multline}\label{gIteratedTransport}
\partial_{t}g^{n+1}-\frac{1}{2}\partial_{zz}\left(\sigma^{2}(z)g^{n+1}\right)
\\
+\partial_{z}\left(\mu(z)g^{n+1}\right)
+\partial_{a}\left(\chi(z+r^{n}(t)a)g^{n+1}\right)+\partial_{a}\left(g^{n+1}\chi\Theta_{c}(y^{n},f^{n})\right)=0.
\end{multline}
We take this with initial data
\begin{equation}\label{mollData}
g^{n+1}(0,\cdot)=g_{0,\delta}.
\end{equation}
Note that we have inserted a factor of the cutoff function $\chi$ in the transport terms.  A difficulty of the system
is that as long as $r\neq0,$ the transport speeds are unbounded.  With the factors of $\chi$ present, this is no
longer the case for our approximate equations.  We will be able to remove the factors of $\chi$ by the end of our
existence argument.

The transport speed in \eqref{gIteratedTransport} with respect to the variable $a,$ then, is 
$\chi z+r^{n}(t)\chi a+\chi\Theta_{c}(y^{n},f^{n}).$ 
Denote by $R$ an upper bound on $r^{n}(t),$ and 
denote by $Y$ and upper bound on $\Theta(y^{n},f^{n}),$ presuming for the moment that these bounds can
be found independent of our parameters $n$ and $\delta.$
Then the transport speed is bounded by $z_{max}+3RA+Y,$ independently of $n$ and $\delta.$
Thus, until time $T,$ the support of $g^{n+1}$ with respect to $a,$ which is initially contained in
$A_{1},$ remains contained in $A_{2}$ as long as $T\leq\frac{A}{z_{max}+3RA+Y}.$

We next give the iterated equation for $y:$
\begin{multline}\label{yIteratedTransport}
\partial_{t}y^{n+1} + \frac{1}{2}\sigma^{2}(z)\partial_{zz}y^{n+1} + \mu(z)\partial_{z}y^{n+1} + r^{n}(t)y^{n+1} 
\\
+ (\chi z+r^{n}(t)\chi a)\partial_{a}y^{n+1} + \chi\Theta_{c}(y^{n},f^{n})\partial_{a}y^{n+1} -\rho y^{n+1} =0.
\end{multline}
As above, we take this with mollified data 
\begin{equation}\label{yMollData}
y^{n+1}(T,\cdot)=y_{T,\delta}.
\end{equation}
Again, the solutions may more properly be called $y^{n,\delta},$ but we will suppress the $\delta$ dependence for the time 
being.  Note that we have the same transport speed with respect to $a$ as in the $g^{n+1}$ equation, and therefore
we have the same support properties; with initial data supported in $A_{1},$ and with the presumed upper bounds, 
the support of $y^{n+1}$ remains in $A_{2}$ as long as $T\leq\frac{A}{z_{max}+3RA+Y}.$

To finish specifying the iterated problem, we must specify $f^{n+1}$ and $r^{n+1},$ and the latter of these will require
specifying $P^{n+1}$ and $K^{n+1}.$    We take $f^{n+1}$ to be the solution of the ordinary differential equation
\begin{equation}\label{fIteratedEquation}
(f^{n+1})'(t)+r^{n}(t)f^{n+1}(t)-\rho f^{n+1}(t)=0,
\end{equation}
with terminal condition $f(T)=1.$
Notice that the solution of this terminal value problem is
\begin{equation}\label{fIteratedExplicit}
f^{n+1}(t)=\exp\left\{\int_{t}^{T}r^{n}(t')-\rho\ dt'\right\}.
\end{equation}

We take $r^{n+1}$ to be given by
\begin{equation}\label{rIterated}
r^{n+1}=\frac{P^{n}}{K^{n}},
\end{equation}
where we need to define $P^{n}$ and $K^{n}.$
Consistent with our previous definition of $K$ we denote
\begin{equation}\nonumber
K[y,g,f]=\int\int g(H_{pp}(y+fw_{\infty}))(y+fw_{\infty})\ dadz;
\end{equation}
but $K[y^{n},g^{n},f^{n}]$ 
is not sufficient for use in our iterative scheme because we need to use the cutoff function $\psi.$
Thus, for any value of $n,$ given $y^{n},$ $f^{n},$ and $g^{n},$ we define $K^{n+1}$ as
\begin{equation}\label{KIteratedDefinition}
K^{n+1}=\int\int g^{n}(H_{pp}(\psi(y^{n}+f^{n}w_{\infty})))(y^{n}+f^{n}w_{\infty})\ dadz.
\end{equation}
Finally, recalling $P[y,f,g]$ as defined in \eqref{definitionOfP}, we must introduce a version $P_{c}$ which involves
the cutoff function $\psi:$
\begin{multline}\nonumber
P_{c}[y,f,g]=
-\int\int z\partial_{z}(\mu(z)g)\ dadz
\\
+\int\int gH_{pp}(\psi(y+fw_{\infty}))
\left(-\frac{1}{2}\sigma^{2}\partial_{zz}y-\mu\partial_{z}y-\Theta_{c}(y,f)\partial_{a}y
+\rho(y+fw_{\infty})\right)\ dadz\\
+\int\int \Theta_{c}(y,f)
\left(\frac{1}{2}\partial_{zz}(\sigma^{2}g)-\partial_{z}(\mu g)-\partial_{a}(g\Theta_{c}(y,f))
\right)\ dadz.
\end{multline}
We can then define our iterated $P$ as
\begin{equation}\nonumber
P^{n}=P_{c}[y^{n},f^{n},g^{n}].
\end{equation}

\section{Existence and bounds for the iterates}\label{existenceAndBounds}

In order to eliminate our approximation parameters, i.e. send $n\rightarrow\infty$ and $\delta\rightarrow0,$
we need to establish bounds for the iterates which are uniform with respect to $n$ and $\delta.$
We fix a value $M>1$ and we
assume the following are satisfied by the $n$-th iterates:
\begin{equation}\label{yInductiveHypothesis}
\|y^{n}\|_{H^{s+1}}\leq M\|y_{T}\|_{H^{s+1}},
\end{equation}
\begin{equation}\label{gInductiveHypothesis}
\|g^{n}\|_{H^{s}}\leq M\|g_{0}\|_{H^{s}},
\end{equation}
\begin{equation}\label{fInductiveHypothesis}
f^{n}\in\left[\frac{1}{2},2\right],\quad\forall t\in[0,T],
\end{equation}
\begin{equation}\label{KInductiveHypothesis}
K^{n}\geq \frac{K_{data}}{2},\quad\forall t\in[0,T].
\end{equation}
We furthermore assume that the $n$-th iterates are infinitely smooth.

Based on these values, we define a value $P_{max};$ we take this to be the supremum of
the set of values $\{|P_{c}[\tilde{y},\tilde{f},\tilde{g}]|\},$ where $\tilde{y},$ $\tilde{g},$ and $\tilde{f}$ satisfy
\begin{equation}\nonumber
\|\tilde{y}\|_{H^{s+1}}\leq M\|y_{T}\|_{H^{s+1}},\qquad
\|\tilde{g}\|_{H^{s}}\leq M\|g_{0}\|_{H^{s}},\qquad
\tilde{f}\in[1/2,2].
\end{equation}
With this definition, we then have our inductive hypothesis for the iterates for the interest rate:
\begin{equation}\label{rInductiveHypothesis}
r^{n}\in\left[-\frac{2P_{max}}{K_{data}},\frac{2P_{max}}{K_{data}}\right],\quad\forall t\in[0,T].
\end{equation}

Finally, we have one more condition we wish to have satisfied for our iterates, and that is the positivity condition for
$\partial_{a}v.$  Recall the definition of $W>0$ in \eqref{dataW}.
Then we desire that the following condition is satisfied for $y^{n}$ and $f^{n}:$
\begin{equation}\label{WInductiveHypothesis}
\min_{(t,a,z)\in[0,T]\times D} \left(y^{n}(t,a,z)+f^{n}(t)w_{\infty}\right) \geq \frac{W}{2}.
\end{equation}

Note that with our specification of the initial iterates, the bounds 
\eqref{yInductiveHypothesis}, \eqref{gInductiveHypothesis}, \eqref{fInductiveHypothesis}, and \eqref{rInductiveHypothesis}
are satisfied for $n=0.$  
By \eqref{deltaData1} we have satisfied \eqref{WInductiveHypothesis} as well for $n=0.$
Similarly, by \eqref{deltaData2}, we have satisfied \eqref{KInductiveHypothesis} when $n=0.$
We may also note that all of the initial iterates are in $C^{\infty}.$
We must verify that each of \eqref{yInductiveHypothesis}, \eqref{gInductiveHypothesis},
\eqref{fInductiveHypothesis}, \eqref{KInductiveHypothesis}, \eqref{rInductiveHypothesis}, and 
\eqref{WInductiveHypothesis} are satisfied for the $(n+1)$-st iterates, but first we must ensure
that the $(n+1)$-st iterates exist.

\begin{lemma}\label{gIteratedLemma} Let $T>0,$ and 
let $y^{n},$ $g^{n},$ $r^{n},$ $f^{n},$ and $K^{n}$ be as described above, on the time interval $[0,T].$
There exists a unique $C^{\infty}$ solution $g^{n+1}$ to the initial value problem
\eqref{gIteratedTransport}, \eqref{mollData} on the time interval $[0,T].$
\end{lemma}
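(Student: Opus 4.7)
The equation \eqref{gIteratedTransport} is a linear PDE for the unknown $g^{n+1}$ with coefficients determined by the $n$-th iterates, which are $C^\infty$ by the standing inductive hypothesis. The principal part is degenerate parabolic: diffusion occurs only in the $z$-direction, and even that vanishes at $z=z_{\min}$ and $z=z_{\max}$. My plan is to construct a solution via vanishing viscosity and then pass to the limit using energy estimates that are uniform in the regularization parameter.

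First I would add an artificial viscosity term $-\epsilon(\partial_{aa}+\partial_{zz})g^{n+1,\epsilon}$ to the left-hand side of \eqref{gIteratedTransport}, obtaining a uniformly parabolic linear equation on $D=A_{3}\times[z_{\min},z_{\max}]$. Since $\chi$ vanishes at $a=\pm 3A$ and $\sigma,\mu$ vanish at $z=z_{\min},z_{\max}$, no boundary conditions are forced by the limit equation; for the regularized problem I would impose Dirichlet $g^{n+1,\epsilon}=0$ at $a=\pm 3A$ (compatible with the compact support of the data in $A_{1}$) and homogeneous Neumann conditions at $z=z_{\min},z_{\max}$. Classical linear parabolic theory (e.g., Ladyzhenskaya--Solonnikov--Ural'tseva) then yields a unique $C^{\infty}$ solution $g^{n+1,\epsilon}$ on $[0,T]\times D$.

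The heart of the argument is to derive energy bounds on $g^{n+1,\epsilon}$ in $H^{k}$ for each $k$, uniform in $\epsilon$. For each multi-index $\alpha$ with $|\alpha|\leq k$, I would apply $\partial^{\alpha}$ to the regularized equation, pair with $\partial^{\alpha}g^{n+1,\epsilon}$ in $L^{2}(D)$, and integrate by parts. The viscosity term produces a non-positive contribution $-\epsilon\|\nabla\partial^{\alpha}g^{n+1,\epsilon}\|_{L^{2}}^{2}$ which can be discarded, while the transport and lower-order terms yield commutator expressions controlled by $\|g^{n+1,\epsilon}\|_{H^{k}}$, with constants depending on the $C^{\infty}$ norms of the smooth coefficients but independent of $\epsilon$. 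Gronwall then gives $\|g^{n+1,\epsilon}(t)\|_{H^{k}}\leq C_{k}\|g_{0,\delta}\|_{H^{k}}$. Weak-* compactness in $H^{k}$ together with Aubin--Lions yields a subsequence converging as $\epsilon\to 0^{+}$ to a limit $g^{n+1}$ that solves \eqref{gIteratedTransport} and inherits the $H^{k}$ bound for every $k$; Sobolev embedding in space, combined with using the equation itself to trade space derivatives for time derivatives, then gives $g^{n+1}\in C^{\infty}([0,T]\times D)$.

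Uniqueness follows immediately by applying the same $L^{2}$ energy estimate to the difference of two smooth solutions with the same initial data. The main obstacle I anticipate is really just the bookkeeping in the uniform $H^{k}$ estimates: checking that the commutators produced when derivatives fall on the coefficients $\chi(a)(z+r^{n}(t)a)$ and $\chi(a)\Theta_{c}(y^{n},f^{n})$ can be absorbed into a Gronwall inequality at each Sobolev level. This should be routine because $\chi$ is compactly supported and smooth, $r^{n}$ and $f^{n}$ are bounded smooth functions of $t$, and $\Theta_{c}(y^{n},f^{n})$ is smooth thanks to the cutoff $\psi$ and the inductive smoothness of $y^{n}$, so no genuine regularity is lost in passing the viscosity to zero.
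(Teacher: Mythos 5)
Your route (vanishing viscosity plus linear parabolic theory on the bounded domain $D$) is a genuine alternative to what the paper does. The paper instead cuts off the coefficients, extends the problem to a torus in both the $a$ and $z$ directions, and regularizes by applying Friedrichs mollifiers $\mathcal{J}_\tau$ inside the evolution operator itself; existence for the mollified problem then follows from the Picard theorem for ODEs in Banach spaces, and the torus setting makes every integration by parts in the uniform $H^{\omega}$ estimate boundary-term free. Your regularization avoids the extension/cutoff machinery, but it trades it for boundary terms that you would need to control and do not.

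That is the concrete gap. When you apply $\partial^\alpha$ to the regularized equation on $D=A_3\times[z_{\min},z_{\max}]$ and pair with $\partial^\alpha g^{n+1,\epsilon}$, the viscosity produces not only the sign-definite piece $-\epsilon\|\nabla\partial^\alpha g^{n+1,\epsilon}\|_{L^2}^2$ but also a boundary integral of the form $\epsilon\int_{\partial D}(\partial^\alpha g^{n+1,\epsilon})(\partial_\nu \partial^\alpha g^{n+1,\epsilon})$. For $\alpha=0$ this vanishes under your Dirichlet/Neumann choice, but for $|\alpha|\ge 1$ the derivatives $\partial^\alpha g^{n+1,\epsilon}$ do not inherit those boundary conditions, so the term is neither zero nor obviously $O(1)$ as $\epsilon\to 0^{+}$; your Gronwall step $\|g^{n+1,\epsilon}(t)\|_{H^k}\le C_k\|g_{0,\delta}\|_{H^k}$ is therefore unsupported as written. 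At $a=\pm 3A$ you could plausibly repair this with a barrier argument showing that the solution and all derivatives are exponentially small near the artificial boundary, but at $z=z_{\min},z_{\max}$ the solution is not small, and getting $H^k$ estimates up to a boundary for a degenerate-diffusion operator with Neumann conditions typically requires reflection arguments (with evenness/compatibility hypotheses on $\sigma^2,\mu$) or tangential-regularity arguments with normal derivatives recovered from the equation; neither is automatic here. There is also a compatibility issue at $t=0$: for the viscous Neumann problem to yield a solution $C^\infty$ up to $t=0$ you would need $g_{0,\delta}$ to satisfy infinitely many compatibility conditions at $z=z_{\min},z_{\max}$, which is not part of the hypotheses. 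The paper's extend-to-a-torus construction is designed precisely to sidestep all of these boundary obstructions.
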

\begin{proof}
We prove existence by the energy method, the steps of which are to introduce mollifiers, 
use the Picard theorem to get existence of solutions, prove an estimate uniform with respect to the mollification
parameter, and then pass to the limit as the mollification parameter vanishes.  
To use standard theory of mollifiers, we first replace our spatial domain with a torus.

We make an extension of the domain in the $z$ variable.  Let $\omega\in\mathbb{N}$ be any
finite degree of regularity, sufficiently large.
We take $\widetilde{\sigma},$ $\widetilde{\mu},$ and $\widetilde{\Theta}$ to be $H^{\omega+2}$ extensions
of $\sigma,$ $\mu,$ and $\Theta(y^{n},f^{n})$ to the domain $[z_{min}-3, z_{max}+3]$ (for $\sigma$ and $\mu$)
and to the domain $A_{3}\times[z_{min}-3, z_{max}+3]$ (for $\Theta$).  There are many versions of the existence of such
extensions available in the literature, and we cite \cite{millerExtension} in particular.
We let $\phi$ be a cutoff function which is equal to $1$ for $z\in[z_{min}-1, z_{max}+1]$
and which is equal to zero on $[z_{min}-3,z_{min}-2]$ and on $[z_{max}+2,z_{max}+3],$ and which is smooth
and monotone on the remaining components of the new $z$ domain.  In writing an evolution equation
to approximate \eqref{gIteratedTransport}, we will replace
$\sigma,$ $\mu,$ and $\Theta(y^{n},f^{n})$ with $\phi\widetilde{\sigma},$ $\phi\widetilde{\mu},$
and $\phi\widetilde{\Theta},$ respectively.  We also replace the transport coefficient
$\chi(z+r^{n}a)$ with $\phi\chi(z+r^{n}a).$  We take $\widetilde{g}_{0}$ to be an $H^{\omega}$ extension of
$g_{0,\delta},$ and we will use data $\phi\chi\widetilde{g}_{0}.$

The coefficients in our new evolution equation, because they are zeroed out at the ends of the interval
$[z_{min}-3, z_{max}+3],$ are periodic with respect to $z.$  Similarly, the coefficients are all also periodic with respect
to $a$ on $A_{3}.$  Because of the presence of $\chi$ and $\phi$ in our proposed data, we also have periodic 
initial data.  We call our new domain $\widetilde{D},$ and we consider this now to be a torus, i.e. we take
periodic boundary conditions.  We let $\mathcal{J}_{\tau}$ be a standard mollifier on the two-dimensional torus
with parameter $\tau>0.$  We introduce an approximate equation:
\begin{multline}\label{hEvolution}
\partial_{t}h^{\tau}-\frac{1}{2}\partial_{zz}\mathcal{J}_{\tau}((\phi\widetilde{\sigma})^{2}\mathcal{J}_{\tau}h^{\tau})
\\
+\partial_{z}\mathcal{J}_{\tau}((\phi\widetilde{\mu})\mathcal{J}_{\tau}h^{\tau})+
\partial_{a}\mathcal{J}_{\tau}(\phi\chi(z+r^{n}a)\mathcal{J}_{\tau}h^{\tau})
+\partial_{a}\mathcal{J}_{\tau}(\phi\chi\widetilde{\Theta}\mathcal{J}_{\tau}h^{\tau})=0.
\end{multline}
As we have said, we take this evolution with initial condition
\begin{equation}\nonumber
h(0,\cdot)=\phi\chi\widetilde{g}_{0}.
\end{equation}
The presence of the mollifiers turns all derivatives on the right-hand side of \eqref{hEvolution} into bounded operators;
the Picard Theorem \cite{majdaBertozzi} 
then implies that there exists a solution for a time $T_{\tau}>0.$  This solution may
be continued as long as the solution does not blow up; in this case, an energy estimate, using standard mollifier 
properties and integration by parts, implies that the $H^{\omega}(\widetilde{D})$ norm of $h$ does not blow up on $[0,T].$
We introduce an energy, equivalent to the square of the $H^{\omega}(\widetilde{D})$ norm,
\begin{equation}\nonumber
E(t)=\sum_{j=0}^{\omega}\sum_{\ell=0}^{\omega-j}E_{j,\ell}(t),
\qquad
E_{j,\ell}(t)=\frac{1}{2}\int_{\widetilde{D}}\left(\partial_{a}^{j}\partial_{z}^{\ell}h^{\tau}(t,a,z)\right)^{2}\ dadz.
\end{equation}
Taking the time derivative of the energy, using the facts that $\mathcal{J}_{\tau}$
commutes with derivatives and is self-adjoint, and using other
mollifier properties such as $\|\mathcal{J}_{\tau}f\|_{H^{m}}\leq\|f\|_{H^{m}}$ for any $f$ and any $m,$ and integrating
by parts yields the conclusion
\begin{equation}\label{tauEnergyEstimate}
\frac{dE}{dt}\leq cE,
\end{equation}
where $c$ is independent of $\tau.$ (We do not provide further 
details of this energy estimate as it is very similar to the 
estimate in Theorem \ref{uniformBoundTheorem} below).  
The bound \eqref{tauEnergyEstimate} implies that the solutions $h^{\tau}$
are uniformly bounded in $H^{\omega}(\widetilde{D})$ with respect to the approximation parameter $\tau,$ 
and that our solutions $h^{\tau}$ all exist on the common time interval $[0,T].$

The uniform bound implies that the first derivatives of the solutions with respect to $a,$ $z,$ and $t$ are all uniformly
bounded, and thus our solutions $h^{\tau}$ form an equicontinuous family.
Thus there is a uniformly convergent subsequence (which we do not relabel), 
as $\tau$ vanishes; we call the limit $h.$
Uniform convergence implies convergence in $L^{2}$ in a bounded domain, so we see that
$h^{\tau}$ converges to $h$ in $C([0,T];L^{2}(\widetilde{D})).$  Using the uniform bound 
in $H^{\omega}(\widetilde{D}),$
a standard Sobolev interpolation theorem (see \cite{ambroseThesis}, for example) then implies convergence
in $C([0,T];H^{\omega-1}(\widetilde{D})).$  Furthermore the uniform bound implies that we have a weak limit
at every time in $H^{\omega},$ and this weak limit must be $h,$ so we have $h\in L^{\infty}([0,T];H^{\omega})$
as well.

Taking the integral with respect to time of \eqref{hEvolution} and then passing to the limit as $\tau$ 
vanishes (this is possible because of the regularity we have established, including convergence in 
$C([0,T];H^{\omega-1})$), and then differentiating with respect to time, we see that $h$ satisfies
\begin{equation}\label{hEquationAfterLimit}
\partial_{t}h-\frac{1}{2}\partial_{zz}((\phi\widetilde{\sigma})^{2}h)
+\partial_{z}(\phi\widetilde{\mu}h)+\partial_{a}(\phi\chi(z+r^{n}a)h)+\partial_{a}(\phi\chi\widetilde{\Theta}h)=0.
\end{equation}
When taking this limit we again use various standard mollifier properties; a good list of such properties can be found
in Lemma 3.5 of \cite{majdaBertozzi}.  Perhaps the most useful of these to arrive at \eqref{hEquationAfterLimit}
is, for any $m\in\mathbb{N},$ 
\begin{equation}\nonumber
\|\mathcal{J}_{\tau}f-f\|_{H^{m}}\leq\tau\|f\|_{H^{m+1}}.
\end{equation}
We define $g^{n+1}$ to be the restriction of $h$ to the domain $D.$  On $D,$ we have $\phi=1,$
$\widetilde\sigma=\sigma,$ $\widetilde{\mu}=\mu,$ $\widetilde{\Theta}=\Theta(y^{n},f^{n}),$ and
$\widetilde{g}_{0}=g_{0,\delta}.$  Furthermore on $D$ we also have 
$\chi g_{0,\delta}=g_{0,\delta}.$  We conclude that $g^{n+1}$ satisfies
\eqref{gIteratedTransport} and \eqref{mollData}.

We have two further points to make, to complete the proof.  First, we mention that uniqueness of solutions of 
the initial value problem \eqref{gIteratedTransport}, \eqref{mollData} is straightforward. The initial value problem
satisfied by the difference of two solutions is a linear equation with zero forcing and zero data, and an
estimate in $L^{2}$ for the difference of two smooth solutions can be made.
Finally, on regularity, we mention that the regularity parameter $\omega$ was arbitrary, so we see that the solution
$g^{n+1}$ is infinitely smooth with respect to the spatial variables.  Upon taking higher derivatives of 
\eqref{gIteratedTransport} with respect to time, it can be seen that the solutions are also infinitely smooth with 
respect to time.  This completes the proof.
\end{proof}

We also have existence of the iterated $y^{n+1},$ given in the following lemma.
\begin{lemma}\label{yIteratedLemma}
Let $y^{n},$ $g^{n},$ $r^{n},$ $f^{n},$ and $K^{n}$ be as described above.
There exists a unique $C^{\infty}$ solution $y^{n+1}$ to the initial value problem
\eqref{yIteratedTransport}, \eqref{yMollData} on the time interval $[0,T].$
\end{lemma}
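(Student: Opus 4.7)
The plan is to mirror the proof of Lemma \ref{gIteratedLemma}, which already establishes the template; the only genuine novelty is that \eqref{yIteratedTransport} is backward parabolic in $t$ and is equipped with terminal data at $t=T$. First I would reverse time by setting $\tau=T-t$ and $\bar y(\tau,\cdot)=y^{n+1}(T-\tau,\cdot)$. The reversed equation becomes forward parabolic in $\tau$ with diffusion $\tfrac12\sigma^2(z)\partial_{zz}$, transport coefficients $-\mu(z)$, $-(\chi z+r^n(T-\tau)\chi a)$, and $-\chi\Theta_c(y^n,f^n)(T-\tau,\cdot)$, plus a zero-order term $(\rho-r^n(T-\tau))\bar y$, with initial data $\bar y(0,\cdot)=y_{T,\delta}$. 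This puts the problem in the same format as in Lemma \ref{gIteratedLemma}, with the slight difference that the transport is in non-divergence form.

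Next I would carry out the same construction used there: extend $\sigma$, $\mu$, and $\Theta_c(y^n,f^n)$ as $H^{\omega+2}$ functions to $[z_{\min}-3,z_{\max}+3]$ (using for example \cite{millerExtension}); multiply by the smooth cutoff $\phi$ so that all coefficients vanish near the ends and are periodic on an enlarged torus $\widetilde D$; extend the data as $\phi\chi\widetilde{y}_{T,\delta}$ so that the initial condition is likewise periodic and supported in $A_3$ with respect to $a$; and pose the corresponding mollified evolution on $\widetilde D$ using a standard mollifier $\mathcal{J}_{\varkappa}$ (renamed to avoid collision with the new time variable). The Picard theorem on $H^\omega(\widetilde D)$ yields short-time existence of a smooth solution $\bar y^{\varkappa}$.

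The heart of the argument is a uniform $H^\omega(\widetilde D)$ energy estimate of the form $dE/d\tau\le cE$ with $c$ independent of $\varkappa$, which proceeds exactly as outlined in Lemma \ref{gIteratedLemma} and in the forthcoming Theorem \ref{uniformBoundTheorem}. Commutators $[\partial_a^j\partial_z^\ell,c]\partial_* \bar y^{\varkappa}$ are controlled by $C\|c\|_{H^\omega}\|\bar y^{\varkappa}\|_{H^\omega}$ via standard product estimates, no derivative is lost because $\sigma$, $\mu$, $\Theta_c(y^n,f^n)$, and $\chi$ are smooth, and the zero-order terms contribute only a bounded multiple of $E$ thanks to \eqref{rInductiveHypothesis}. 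The bound extends the solution to all of $[0,T]$ uniformly in $\varkappa$. Passage to the limit $\varkappa\to 0$ by Arzel\`a-Ascoli and weak compactness produces $\bar y\in L^\infty([0,T];H^\omega(\widetilde D))\cap C([0,T];H^{\omega-1}(\widetilde D))$ solving the extended equation; restriction to $D$ and untwisting the time reversal yields $y^{n+1}$ satisfying \eqref{yIteratedTransport}, \eqref{yMollData}.

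Uniqueness then follows from an $L^2$ energy estimate on the linear, zero-forcing, zero-data equation satisfied by the difference of two smooth solutions, as in Lemma \ref{gIteratedLemma}. Finally, infinite spatial regularity comes from the arbitrariness of $\omega$, and infinite regularity in $t$ is obtained inductively by differentiating \eqref{yIteratedTransport} in time and reading off the next time derivative algebraically. The main potential obstacle is simply the bookkeeping associated with the time reversal and with making sure the extension/mollification argument of Lemma \ref{gIteratedLemma} goes through with advection in non-divergence form; in fact the latter produces fewer commutator terms than the divergence-form case, so there is no new analytic difficulty.
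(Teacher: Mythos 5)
Your proposal is correct and matches the paper's approach: the paper simply omits the proof, asserting it is entirely the same as Lemma \ref{gIteratedLemma}, and your time reversal $\tau=T-t$ followed by the same extension/periodization/mollification/Picard/energy-estimate/limit argument is precisely what that omission implies. Your observations about the non-divergence-form transport posing no new difficulty and the need to rename the mollification parameter are accurate housekeeping rather than departures from the paper's method.
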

We omit the proof of Lemma \ref{yIteratedLemma}, 
as the method is entirely the same as that of Lemma \ref{gIteratedLemma}.

To conclude this section, 
we mention that it is immediate from their definitions and the smoothness assumptions on the $n$-th iterates
that $f^{n+1},$ $K^{n+1},$ and $r^{n+1}$ are $C^{\infty}$ in time.

\subsection{Uniform Bounds}\label{uniformSection}
Recall that we have fixed $s\in\mathbb{N}$ satisfying $s\geq4,$ and we have taken  
$g_{0}\in H^{s}$ and $y_{T}\in H^{s+1}.$  The requirement $s\geq 4$ will guarantee that the solutions we find
are classical solutions of the PDE system, and will allow us to use Sobolev embedding and related inequalities
as needed.  Note that while we have demonstrated above that the iterates are infinitely smooth, this has relied on 
the $C^{\infty}$ approximation $g_{0,\delta}$ to the intended data $g_{0};$ with the data $g_{0}\in H^{s}$
and $y_{T}\in H^{s+1},$ we can only expect bounds on the iterates which are uniform with respect to the 
parameters in these spaces.

\begin{theorem}\label{uniformBoundTheorem}
There exists $T_{*}>0$ such that if the time horizon satisfies
$T\in(0,T_{*}),$ then for all $n\in\mathbb{N}$ and for all $\delta>0,$
the iterates $(y^{n},g^{n},f^{n},K^{n},r^{n})$ defined above satisfy 
\eqref{yInductiveHypothesis}, \eqref{gInductiveHypothesis}, \eqref{fInductiveHypothesis}, \eqref{KInductiveHypothesis},
\eqref{rInductiveHypothesis}, and \eqref{WInductiveHypothesis}.
\end{theorem}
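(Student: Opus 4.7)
The plan is to proceed by induction on $n$. The base case $n=0$ is already verified in the preceding discussion: the initial iterates satisfy \eqref{yInductiveHypothesis}--\eqref{gInductiveHypothesis} by the choice of $\delta$ small, \eqref{fInductiveHypothesis} and \eqref{rInductiveHypothesis} because $f^{0}\equiv 1$ and $r^{0}\equiv 0$, and \eqref{WInductiveHypothesis}, \eqref{KInductiveHypothesis} by \eqref{deltaData1} and \eqref{deltaData2}. For the inductive step, I would fix $n$, assume the six hypotheses at level $n$, and verify them at level $n+1$. The time horizon $T_{*}$ will be selected at the very end as the minimum of the smallness thresholds produced in each step; it depends only on $M$, $P_{\max}$, $K_{data}$, $W$, $\rho$, $w_{\infty}$, and the data norms.

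Several of the hypotheses at level $n+1$ are immediate consequences of the definitions. For \eqref{rInductiveHypothesis}, the triple $(y^{n},f^{n},g^{n})$ lies in the class over which $P_{\max}$ is the supremum of $|P_{c}|$, so $|P^{n}|\leq P_{\max}$; combining with $K^{n}\geq K_{data}/2$ from \eqref{KInductiveHypothesis} yields $|r^{n+1}|\leq 2P_{\max}/K_{data}$. For \eqref{fInductiveHypothesis}, the explicit formula \eqref{fIteratedExplicit} together with the uniform bound on $r^{n}$ gives $f^{n+1}(t)\in[e^{-(2P_{\max}/K_{data}+\rho)T},\,e^{(2P_{\max}/K_{data}+\rho)T}]$, which lies in $[1/2,2]$ for $T$ small. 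I also need to confirm here that, with the inductive bounds, the support speed $z_{\max}+3RA+Y$ is bounded, so that taking $T$ even smaller keeps the supports of $g^{n+1}$ and $y^{n+1}$ inside $A_{2}$.

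The core of the argument is a pair of Sobolev energy estimates, one for $g^{n+1}$ in $H^{s}$ and one for $y^{n+1}$ in $H^{s+1}$. For $g^{n+1}$, I would differentiate \eqref{gIteratedTransport} $|\alpha|\leq s$ times in space, test against $\partial^{\alpha}g^{n+1}$, and sum. The diffusion $-\tfrac{1}{2}\partial_{zz}(\sigma^{2}g^{n+1})$ produces, after integration by parts, the dissipative contribution $-\int \sigma^{2}(\partial_{z}\partial^{\alpha}g^{n+1})^{2}\,dadz$ plus commutators with $\sigma^{2}$ that are absorbed into the lower-order part. Boundary contributions vanish: in $z$ thanks to $\sigma(z_{\min})=\sigma(z_{\max})=0$ and $\mu(z_{\min})=\mu(z_{\max})=0$, and in $a$ by support. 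The transport terms generate commutators of $\partial^{\alpha}$ with $\chi(z+r^{n}a)$ and with $\chi\Theta_{c}(y^{n},f^{n})$; since $s\geq 4$, $H^{s}(D)$ is a Banach algebra embedded in $C^{1}$, and Moser-type estimates give $\|\Theta_{c}(y^{n},f^{n})\|_{H^{s}}\leq C(M,\|y_{T}\|_{H^{s+1}})$ via the smoothness of $H_{p}\circ\psi$. The upshot is a Gronwall-type inequality $\frac{d}{dt}\|g^{n+1}\|_{H^{s}}^{2}\leq C_{1}\|g^{n+1}\|_{H^{s}}^{2}$ with $C_{1}$ independent of $n$ and $\delta$; integrating and using $\|g^{n+1}(0,\cdot)\|_{H^{s}}\leq \|g_{0}\|_{H^{s}}+\delta$ yields \eqref{gInductiveHypothesis} provided $T$ and $\delta$ are chosen so that $(\|g_{0}\|_{H^{s}}+\delta)^{2}e^{C_{1}T}\leq M^{2}\|g_{0}\|_{H^{s}}^{2}$. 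The estimate for $y^{n+1}$ is completely analogous, only run backward in $t$ from $T$; in reversed time the diffusion $\tfrac12\sigma^2\partial_{zz}y^{n+1}$ becomes forward-parabolic and again contributes a dissipation with the correct sign, yielding \eqref{yInductiveHypothesis}. Once this bound is in hand, Sobolev embedding gives a uniform $L^{\infty}$ bound on $\partial_{t}y^{n+1}$, so $|y^{n+1}(t,a,z)-y_{T,\delta}(a,z)|\leq C(T-t)$; combined with $|f^{n+1}(t)-1|\leq CT$ from \eqref{fIteratedExplicit} and with \eqref{deltaData1}, this delivers \eqref{WInductiveHypothesis} for $T$ small. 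For \eqref{KInductiveHypothesis}, I would exploit the fact that $g^{n}(0,\cdot)=g_{0,\delta}$ exactly (initial data), so $K^{n+1}(0)$ differs from $K^{0}$ only through $y^{n}(0,\cdot)-y_{T,\delta}$ and $f^{n}(0)-1$, both $O(T)$; a further time-derivative bound on $K^{n+1}$ (using the PDEs to control $\partial_{t}g^{n}$, $\partial_{t}y^{n}$, and the ODE for $f^{n}$) gives $|K^{n+1}(t)-K^{n+1}(0)|\leq CT$, so \eqref{deltaData2} forces $K^{n+1}(t)\geq K_{data}/2$ after $T$ is taken small.

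The main obstacle is the energy estimate, because the diffusion is anisotropic and degenerate and cannot absorb any $a$-derivative; all control of $\partial_{a}$-derivatives must be bought back with Gronwall rather than parabolic smoothing. This means the whole method hinges on the observation that, due to the cutoffs $\chi$ and $\psi$ and the inductive hypotheses, every coefficient in the linear equations \eqref{gIteratedTransport} and \eqref{yIteratedTransport} is uniformly bounded in $H^{s}(D)$, so the transport commutators close at the level $H^{s}$ for $g^{n+1}$ and $H^{s+1}$ for $y^{n+1}$ without needing any gain of derivatives. Choosing $T_{*}$ as the minimum of the thresholds arising in each of the preceding steps, the induction closes, and the theorem follows.
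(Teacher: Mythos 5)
Your proof is correct and follows essentially the same route as the paper: induction on $n$, with the core being a Gronwall-type $H^{s}$ energy estimate for $g^{n+1}$ (and $H^{s+1}$ for $y^{n+1}$, run backward), then direct bounds for $r^{n+1}$ from $P_{\max}$ and \eqref{KInductiveHypothesis}, for $f^{n+1}$ from \eqref{fIteratedExplicit}, and continuity-in-time arguments for $K^{n+1}$ and \eqref{WInductiveHypothesis}, finally taking $T_{*}$ as the minimum of the resulting thresholds. Your handling of $K^{n+1}$ (observing that $g^{n}(0,\cdot)=g_{0,\delta}$ exactly so $K^{n+1}(0)-K^{0}$ is $O(T)$, then bounding $\partial_t K^{n+1}$) is slightly more explicit than the paper's but amounts to the same estimate; the one small presentational difference is that for the vanishing of $a$-boundary terms in the energy identity you appeal to compact support while the paper appeals to the vanishing of the cutoff $\chi$ at $\partial A_3$ — both are valid, and the $\chi$-based argument has the minor advantage of not requiring the support-containment bound as a prerequisite for the energy estimate.
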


\begin{proof}
The proof will be by induction.  We have remarked previously that \eqref{yInductiveHypothesis}, \eqref{gInductiveHypothesis}, \eqref{fInductiveHypothesis}, \eqref{KInductiveHypothesis},
\eqref{rInductiveHypothesis}, and \eqref{WInductiveHypothesis} hold in the case $n=0;$ this is the base case.
The statements \eqref{yInductiveHypothesis}, \eqref{gInductiveHypothesis}, \eqref{fInductiveHypothesis}, \eqref{KInductiveHypothesis},
\eqref{rInductiveHypothesis}, and \eqref{WInductiveHypothesis} then together constitute the inductive hypothesis.

We begin by determining a bound for the next iterate $g^{n+1}.$
We let the functional $E_{j,\ell}$ be given by
\begin{equation}\nonumber
E_{j,\ell}(t)=\frac{1}{2}\int_{D}\left(\partial_{a}^{j}\partial_{z}^{\ell}g^{n+1}\right)^{2}\ dadz,
\end{equation}
and we sum over $j$ and $\ell$ to form the energy $E(t),$
\begin{equation}\nonumber
E(t)=\sum_{j=0}^{s}\sum_{\ell=0}^{s-j}E_{j,\ell}(t).
\end{equation}
Of course, the energy $E$ is equivalent to the square of the $H^{s}$-norm of $g^{n+1}.$

We will now demonstrate a bound for the growth of the energy.
For given values of $j$ and $\ell,$ we take the time derivative of $E_{j,\ell}:$
\begin{equation}\label{timeDerivativeEnergy}
\frac{dE_{j,\ell}}{dt}=\int_{D}\left(\partial_{a}^{j}\partial_{z}^{\ell}g^{n+1}\right)
\left(\partial_{a}^{j}\partial_{z}^{\ell}\partial_{t}g^{n+1}\right)\ dadz.
\end{equation}
We therefore need to write a helpful expression for $\partial_{a}^{j}\partial_{z}^{\ell}\partial_{t}g^{n+1}.$  
Applying derivatives to \eqref{gIteratedTransport}, we arrive at the expression
\begin{multline}\label{ellNonzero}
\partial_{a}^{j}\partial_{z}^{\ell}\partial_{t}g^{n+1}=\frac{1}{2}\sigma^{2}\partial_{a}^{j}\partial_{z}^{\ell+2}g^{n+1}
+(\ell+2)\sigma(\partial_{z}\sigma)\partial_{a}^{j}\partial_{z}^{\ell+1}g^{n+1}-\mu\partial_{a}^{j}\partial_{z}^{\ell+1}g^{n+1}
\\
-(\chi z+r^{n}\chi a)\partial_{a}^{j+1}\partial_{z}^{\ell}g^{n+1}
-\chi\Theta_{c}(y^{n},f^{n})\partial_{a}^{j+1}\partial_{z}^{\ell}g^{n+1}
+\Phi,
\end{multline}
where $\Phi$ is a collection of terms which will be more routine to estimate.  We can write $\Phi$ explicitly:
\begin{multline}\nonumber
\Phi=\frac{1}{2}\sum_{m=2}^{\ell}{\ell+2 \choose m}
\left(\partial_{z}^{m}\sigma^{2}\right)\partial_{a}^{j}\partial_{z}^{\ell+2-m}g^{n+1}
-\sum_{m=1}^{\ell}{\ell+1 \choose m}
\left(\partial_{z}^{m}\mu\right)\partial_{a}^{j}\partial_{z}^{\ell+1-m}g^{n+1}
\\
-\sum_{m=1}^{j}{j+1 \choose m}\left(\partial_{a}^{m}(\chi z+r^{n}\chi a)\right)\partial_{a}^{j+1-m}\partial_{z}^{\ell}g^{n+1}
\\
-\ell\sum_{m=0}^{j+1}{j+1 \choose m}(\partial_{a}^{m}\chi)
\partial_{a}^{j+1-m}\partial_{z}^{\ell-1}g^{n+1}\\
+\left[\partial_{a}^{j+1}\partial_{z}^{\ell}\left(g^{n+1}\chi\Theta_{c}(y^{n},f^{n})\right)
-\left(\partial_{a}^{j+1}\partial_{z}^{\ell}g^{n+1}\right)\chi\Theta_{c}(y^{n},f^{n})
\right].
\end{multline}
Using inequalities for Sobolev functions, we have an estimate for $\Phi,$ namely
\begin{equation}\nonumber
\|\Phi\|_{L^{2}}\leq c\left(1+|r^{n}(t)|+\|\Theta_{c}(y^{n},f^{n})\|_{H^{s+1}}\right)\|g^{n+1}\|_{H^{s}}.
\end{equation}
Since $\Theta_{c}$ is smooth and since the prior iterates satisfy \eqref{yInductiveHypothesis},
\eqref{rInductiveHypothesis}, and \eqref{fInductiveHypothesis}, we see that we may bound $\Phi$ by 
a constant (independent of our parameters $n$ and $\delta$) times the norm of $g^{n+1},$ i.e.
\begin{equation}\label{usefulPhiBound}
\|\Phi\|_{L^{2}}\leq c\|g^{n+1}\|_{H^{s}}.
\end{equation}

We proceed by substituting \eqref{ellNonzero} into \eqref{timeDerivativeEnergy}:
\begin{multline}\nonumber
\frac{dE_{j,\ell}}{dt}=\int_{D}\frac{\sigma^{2}}{2}\left(\partial_{a}^{j}\partial_{z}^{\ell}g^{n+1}\right)
\left(\partial_{a}^{j}\partial_{z}^{\ell+2}g^{n+1}\right)\ dadz\\
+
\int_{D}(\ell+2)\sigma(\partial_{z}\sigma)\left(\partial_{a}^{j}\partial_{z}^{\ell}g^{n+1}\right)
\left(\partial_{a}^{j}\partial_{z}^{\ell+1}g^{n+1}\right)\ dadz
\\
-\int_{D}\mu\left(\partial_{a}^{j}\partial_{z}^{\ell}g^{n+1}\right)
\left(\partial_{a}^{j}\partial_{z}^{\ell+1}g^{n+1}\right)\ dadz
\\
-\int_{D}(\chi z+r^{n}\chi a)\left(\partial_{a}^{j}\partial_{z}^{\ell}g^{n+1}\right)
\left(\partial_{a}^{j+1}\partial_{z}^{\ell}g^{n+1}\right)\ dadz
\\
-\int_{D}\chi \Theta_{c}(y^{n},f^{n})\left(\partial_{a}^{j}\partial_{z}^{\ell}g^{n+1}\right)
\left(\partial_{a}^{j+1}\partial_{z}^{\ell}g^{n+1}\right)\ dadz
+\int_{D}\Phi\left(\partial_{a}^{j}\partial_{z}^{\ell}g^{n+1}\right)\ dadz\\
=I+II+III+IV+V+VI.
\end{multline}

We integrate $I$ by parts with respect to $z$ and add the result to $II,$ finding
\begin{multline}\nonumber
I+II=-\int_{D}\frac{\sigma^{2}}{2}\left(\partial_{a}^{j}\partial_{z}^{\ell+1}g^{n+1}\right)^{2}\ dadz
\\
+(\ell+1)\int_{D}\sigma(\partial_{z}\sigma)\left(\partial_{a}^{j}\partial_{z}^{\ell}g^{n+1}\right)
\left(\partial_{a}^{j}\partial_{z}^{\ell+1}g^{n+1}\right)\ dadz,
\end{multline}
where the properties of $\sigma$ eliminate the presence of a boundary term.  The first integral on the right-hand
side could be used to find gain of regularity, but we will not need this for the present and we instead simply note
that it is nonpositive.  The second integral on the right-hand side can be integrated by parts with respect to $z$ once 
more (and there is again no boundary term), yielding
\begin{equation}\nonumber
I+II\leq -\frac{\ell+1}{2}\int_{D}\left(\sigma\partial_{z}^{2}\sigma+(\partial_{z}\sigma)^{2}\right)
\left(\partial_{a}^{j}\partial_{z}^{\ell}g^{n+1}\right)^{2}\ dadz.
\end{equation}
There exists $c>0,$ then, depending on the function $\sigma$ such that
\begin{equation}\label{firstTwoBounded}
I+II\leq cE.
\end{equation}

Next, we integrate $III$ by parts with respect to the $z$ variable, and we integrate each of $IV$ and $V$ by parts
with respect to the $a$ variable.  This yields the following:
\begin{equation}\nonumber
III=\int_{D}\frac{\partial_{z}\mu}{2}\left(\partial_{a}^{j}\partial_{z}^{\ell}g^{n+1}\right)^{2}\ dadz,
\end{equation}
\begin{equation}\nonumber
IV=\int_{D}\frac{\partial_{a}(\chi z+r^{n}\chi a)}{2}
\left(\partial_{a}^{j}\partial_{z}^{\ell}g^{n+1}\right)^{2}\ dadz,
\end{equation}
\begin{equation}\nonumber
V=\int_{D}\frac{\partial_{a}\left(\chi\Theta_{c}(y^{n},f^{n})\right)}{2}
\left(\partial_{a}^{j}\partial_{z}^{\ell}g^{n+1}\right)^{2}\ dadz.
\end{equation}
Here, there is no boundary term when integrating by parts in $III$ because of the properties of $\mu$ at
$z_{min}$ and $z_{max}.$  There are no boundary terms in $IV$ and $V$ when integrating by parts because of
the presence of the factors of $\chi.$
Just as we bounded $I+II$ in \eqref{firstTwoBounded}, we may bound $III:$
\begin{equation}\label{iiiBound}
III\leq cE.
\end{equation}
For $IV$ and $V,$ since they involve the prior iterates, we must utilize the inductive hypothesis.  For $IV,$ we use
\eqref{rInductiveHypothesis} to find
\begin{equation}\nonumber
IV\leq c\left(1+\frac{2P_{max}}{K_{data}}\right)E.
\end{equation}
Since the constants $P_{max}$ and $K_{data}$ are considered to be fixed (and especially, they do not depend on $n$
or $\delta$), 
we incorporate these into the constant $c$ to write this as 
\begin{equation}\label{ivBound}
IV\leq cE.
\end{equation}

Since the function $\Theta_{c}$ is continuous, there exists a constant $c>0$ such that for all $\tilde{y}$
and $\tilde{f}$ satisfying $\|\tilde{y}\|_{H^{s+1}}\leq M\|y_{T}\|_{H^{s+1}}$ and $\tilde{f}\in[1/2,2],$ we have
$\|\partial_{a}\Theta_{c}(\tilde{y},\tilde{f})\|_{L^{\infty}(D)}\leq c.$  In light of \eqref{yInductiveHypothesis} and
\eqref{fInductiveHypothesis}, then, we conclude
\begin{equation}\label{vBound}
V\leq c E.
\end{equation}

Finally, we may use \eqref{usefulPhiBound} directly to bound $VI$ as
\begin{equation}\label{viBound}
VI\leq cE.
\end{equation}
Adding \eqref{firstTwoBounded}, \eqref{iiiBound}, \eqref{ivBound}, \eqref{vBound}, and \eqref{viBound},
also summing over $j$ and $\ell,$ we have
\begin{equation}\nonumber
\frac{dE}{dt}\leq cE,
\end{equation}
with this constant $c$ independent of $n$ and $\delta.$

Thus, as claimed, for the given value $M>1$ chosen above, there exists $T_{g}>0$ such that if $T\in(0,T_{g})$,
then for all 
$t\in[0,T],$
\begin{equation}\nonumber
\|g^{n+1}(t,\cdot)\|_{H^{s}}\leq M\|g_{0}\|_{H^{s}},
\end{equation}
and this value of $T_{g}$ is independent of both our parameters $n$ and $\delta.$

The details for $y^{n+1}$ are very similar and we omit them.  Our conclusion is that there exists $T_{y}>0$ such
if $T\in(0,T_{y}),$ then for all $t\in[0,T],$
\begin{equation}\nonumber
\|y^{n+1}(t,\cdot)\|_{H^{s+1}}\leq M\|y_{T}\|_{H^{s+1}}.
\end{equation}
Again, this value of $T_{y}$ is independent of $n$ and $\delta.$

We now turn to the estimates for $r^{n+1},$ $f^{n+1},$ and $K^{n+1}.$  The bound for $r^{n+1}$
is immediate from the definition \eqref{rIterated}, the definition of $P_{max},$ and the bounds in the inductive hypothesis
\eqref{yInductiveHypothesis}, \eqref{gInductiveHypothesis}, \eqref{fInductiveHypothesis}, and 
\eqref{KInductiveHypothesis}.
Given the bound \eqref{rInductiveHypothesis} and the formula \eqref{fIteratedExplicit} for $f^{n+1},$ we see that there
exists $T_{f}>0,$ independent of $n$ and $\delta,$ such that if $T\in(0,T_{f})$ then for all $t\in[0,T]$ we have
$f^{n+1}(t)\in[1/2,2].$

We next deal with $K^{n+1},$ as defined in \eqref{KIteratedDefinition}.  Given the bounds on the $n$-th iterates
in the inductive hypothesis, we see that for sufficiently small values of the time horizon, $g^{n}$ remains 
close to the initial data $g_{0,\delta},$ 
$f^{n}$ remains close to its terminal value which is $f^{n}(T)=1,$ and $y^{n}$ remains
close to its terminal data $y_{T,\delta}.$  We conclude that there exists $T_{K}>0,$ with this value independent of $n$ and 
$\delta,$ such that if $T\in(0,T_{K}),$ then for all $t\in[0,T],$ we have $K^{n+1}(t)\geq K_{data}/2.$
(To be clear, we have already taken $\delta$ sufficiently small so that the initial iterate $K^{0}$ satisfies 
$K^{0}\geq 3K_{data}/4,$ and the value of $T_{K}$ is otherwise independent of $\delta.$)

Finally we wish to ensure that $y^{n+1}+f^{n+1}w_{\infty}$ remains bounded below by $W/2.$
Similarly to the bound for $K^{n+1},$ the bounds of the inductive hypothesis imply that the time derivatives of 
$y^{n+1}$ and
$f^{n+1}$ are uniformly bounded, and thus if $T$ is sufficiently small, the minimum of 
$y^{n+1}+f^{n+1}w_{\infty}$ remains close to its terminal value, which by \eqref{deltaData1} is at least $3W/4.$   
Thus there exists $T_{W}>0$ such that
if $T\in(0,T_{W}),$ then 
\begin{equation}\nonumber
\min_{(t,a,z)\in[0,T]\times D}\left(y^{n+1}+f^{n+1}w_{\infty}\right)\geq\frac{W}{2},
\end{equation}
and this value of $T_{W}$ is independent of $n$ and $\delta.$

Choosing $T_{*}=\min\{T_{g},T_{y},T_{f},T_{K},T_{W}\},$ the proof is complete.
\end{proof}

\section{Passage to the limit}\label{limitSection}

We now take the limit of our iterates, proving our main theorem.
\begin{theorem}\label{mainExistenceTheorem}
Let $s\in\mathbb{N}$ satisfying $s\geq 4$ be given, and let $w_{\infty}>0$ be given.  
Let $A>0$ be given and let the spatial domain $D$ be as above.
Let $y_{T}\in H^{s+1}(D)$ and $g_{0}\in H^{s}(D)$ be given, such that the support of $g_{0}$ with respect to $a$
and the support of $y_{T}$ with respect to $a$ are in the interior of the interval $[-A,A],$ and assume that $g_{0}$ is a 
probability measure.  Assume  $W>0,$ where $W$ is defined by \eqref{dataW}.
There exists $T_{**}>0$ such that if $T\in(0,T_{**}),$ then there exists 
$y\in L^{\infty}([0,T];H^{s+1}(D))\cap C([0,T];H^{s'+1}(D))$ for all $s'<s,$ and 
$g\in L^{\infty}([0,T];H^{s}(D))\cap C([0,T];H^{s'}(D))$ for all $s'<s,$ and
$f\in C^{1}([0,T]),$ such that $K[y,g,f]>0$ for all $t\in[0,T]$ and with $r$ defined by
$r=P[y,g,f]/K[y,g,f],$ then $(y,g,f)$ solve \eqref{yFinalEquation}, \eqref{gFinalEquation}, 
and \eqref{fFinalEquation} with data $g(0,\cdot)=g_{0},$ $y(T,\cdot)=y_{T},$ and 
$f(T)=1.$  The solution $g$ is a probability measure at each time $t\in[0,T]$ and
$y+fw_{\infty}$ is positive at each time $t\in[0,T].$
\end{theorem}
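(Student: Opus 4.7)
The plan is to extract convergent subsequences from the iterates of Section \ref{iterativeSection}, pass to the limit in the iterated equations, remove the cutoff functions $\chi$ and $\psi$, and finally send the mollification parameter $\delta$ to zero. I would fix $\delta>0$ first and, using the uniform bounds \eqref{yInductiveHypothesis}--\eqref{WInductiveHypothesis} established in Theorem \ref{uniformBoundTheorem}, extract weak-$\ast$ limits $y^n \rightharpoonup y$ in $L^{\infty}([0,T];H^{s+1})$ and $g^n \rightharpoonup g$ in $L^{\infty}([0,T];H^s)$. Reading the iterated equations \eqref{yIteratedTransport} and \eqref{gIteratedTransport} as expressions for $\partial_t y^{n+1}$ and $\partial_t g^{n+1}$, and using the uniform Sobolev bounds together with the bounds on $r^n$ and $f^n$, the time derivatives are uniformly bounded in $L^{\infty}([0,T];H^{s-1})$ and $L^{\infty}([0,T];H^{s-2})$ respectively. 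An Aubin--Lions / Sobolev interpolation argument of the type already used in Lemma \ref{gIteratedLemma} then upgrades this to strong convergence in $C([0,T];H^{s'+1})\times C([0,T];H^{s'})$ for every $s'<s$. The scalar sequences $f^n$, $K^n$, and $r^n$ are uniformly bounded and, via \eqref{fIteratedEquation} and the definitions of $P^n$ and $K^n$, uniformly Lipschitz in time, so Arzel\`a--Ascoli gives uniform limits $f^n\to f$ and $r^n\to r$ on $[0,T]$.

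Next I would pass to the limit in the iterated equations. Because $s\geq 4$, Sobolev embedding combined with the strong convergence in $C([0,T];H^{s'+1})$ yields $C^1$-convergence of $y^n$ and $g^n$, so every linear transport and diffusion term converges classically. The functions $H_p$, $H_{pp}$, $\psi$, and hence $\Theta_c$ are smooth, so continuity of superposition in $H^{s'}$ gives convergence of $\Theta_c(y^n,f^n)$ and $H_{pp}(\psi(y^n+f^nw_{\infty}))$. Passing to the limit in the integrals defining $P_c[y^n,f^n,g^n]$ and $K^{n+1}$ produces $P_c[y,f,g]$ and $K[y,f,g]$, so the identity $r=P_c[y,f,g]/K[y,f,g]$ holds at every time; the limit of \eqref{fIteratedEquation} is \eqref{fFinalEquation} with $f(T)=1$.

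To remove the cutoffs I use the bounds that survive passage to the limit. The positivity bound \eqref{WInductiveHypothesis} gives $y+fw_{\infty}\geq W/2$ uniformly, so $\psi(y+fw_{\infty})=y+fw_{\infty}$, which collapses $\Theta_c$ to $\Theta$ and $P_c$ to $P$. The transport-speed bound used in Section \ref{iterativeSection} forces the supports of $g$ and $y$ with respect to $a$ to remain inside $A_2$, where $\chi\equiv 1$; hence the cutoff factors disappear from the equations on $D$, and $(y,g,f)$ satisfies \eqref{yFinalEquation}, \eqref{gFinalEquation}, \eqref{fFinalEquation} with data $(g_{0,\delta},y_{T,\delta},1)$. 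That $g$ remains a probability density is preserved by the transport-diffusion form of \eqref{gFinalEquation} (integrating the equation over $D$ gives $\partial_t \int g\,dadz=0$, and nonnegativity follows from the classical transport-diffusion structure once $r$ is viewed as a given bounded function).

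Finally, I would send $\delta\to 0$. All the constants in Theorem \ref{uniformBoundTheorem}, including $T_*$, are independent of $\delta$, so the family $(y^\delta,g^\delta,f^\delta)$ is bounded in the same spaces, with uniformly bounded time derivatives, and compactly supported in $a$ uniformly in $\delta$. The same compactness and limit-passage argument as above produces a limit that solves the system with the original data $(g_0,y_T)$ and retains all the qualitative properties. The main technical obstacle I anticipate is ensuring strong-enough convergence to pass to the limit in the nonlinear coupling through $r$; however, since $r$ is a scalar obtained by spatial integration of smooth nonlinear functions of $(y,g,f)$, uniform convergence in time of $r^n$ to $r$ follows from the strong convergence of the spatial profiles already established, and the equicontinuity estimate that $r^n$ satisfies. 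Setting $T_{**}=T_*$ completes the proof.
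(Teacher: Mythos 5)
Your proposal is correct and follows essentially the same overall strategy as the paper: invoke the uniform bounds of Theorem \ref{uniformBoundTheorem}, extract a convergent subsequence, pass to the limit in the iterated equations, and remove the cutoffs $\chi$ and $\psi$ using the support bound and the lower bound $y+fw_{\infty}\geq W/2$. There are, however, a few differences worth noting. First, you propose a two-stage limit (fix $\delta>0$, let $n\to\infty$, then let $\delta\to 0$), whereas the paper sets $\delta=1/n$ and takes a single diagonal limit; the diagonal choice is not merely cosmetic, since it avoids having to rerun the entire compactness and limit-passage argument a second time with $\delta$ as the parameter, which is what your plan would require. Second, you extract the spatial limits via weak-$\ast$ compactness plus an Aubin--Lions argument, while the paper instead shows equicontinuity of $y^n$ and $g^n$ directly (uniform $L^\infty$ bounds on $\nabla_{a,z}$ and $\partial_t$) and invokes Arzel\`a--Ascoli, then upgrades regularity via Sobolev interpolation and a weak-limit argument for the top-order $L^\infty$-in-time membership; the two routes are equivalent here and both rely on $s\geq4$ to control the time derivatives through the equations. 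Third, you explicitly address the claim that $g$ remains a probability measure (conservation of mass by integrating the divergence-form equation, nonnegativity from the transport--diffusion structure); the paper's proof does not spell this step out, so this is a useful addition, though your nonnegativity argument should be made with care since the diffusion $\sigma$ degenerates at $z_{\min},z_{\max}$ and a comparison/characteristics argument is needed rather than the strong maximum principle for uniformly parabolic equations. Finally, your remark that $r^n$ is uniformly Lipschitz in time and thus equicontinuous is correct but slightly indirect; the paper instead deduces uniform convergence of $r^n$ from uniform convergence of $P^n$ and $K^n$, which in turn follows because the strong $C([0,T];H^{s'})$ convergence of $y^n,g^n$ with $s'$ close to $s\geq4$ already gives uniform convergence of derivatives up to second order.
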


We make a remark on the data and the constraint $\mathcal{C}=0$ before beginning the proof.
\begin{remark}
Note that we have not required in our existence theorem that $\int\int ag_{0}\ dadz=0;$ the existence theorem holds
whether or not the constraint is initially satisfied.  Of course if one hopes to have $\mathcal{C}=0$ for all time, then the
initial data should be taken to satisfy $\mathcal{C}(t=0)=0.$
\end{remark}

\begin{proof}
We have previously suppressed the dependence of the solutions on the mollification parameter $\delta,$ and we have
left this value $\delta>0$ to be arbitrary.  We now consider the sequence of solutions resulting from taking a specific
value of $\delta$ for each $n\in\mathbb{N},$ namely $\delta=1/n.$  In this section we will show that
there is a subsequence of $(y^{n},g^{n},f^{n},K^{n},r^{n})$ which converges to a solution of the transformed system.

We restrict $T$ to values in $(0,T_{*}),$ with $T_{*}$ given by Theorem \ref{uniformBoundTheorem}.
There will be another restriction on $T$ later in the proof.

We begin with $y^{n}.$  By Theorem \ref{uniformBoundTheorem}, on the time interval $[0,T],$ the sequence
$y^{n}$ is uniformly bounded in $H^{s+1}(D).$  With $s\geq 2,$ Sobolev embedding then implies that $\nabla_{a,z}y^{n}$ 
is bounded in $L^{\infty}$ uniformly with respect to $n.$   Insepecting the family of evolution equations
\eqref{yIteratedTransport}, again using the uniform bounds of Theorem \ref{uniformBoundTheorem} and now
using $s\geq 3,$ we see that $\partial_{t}y^{n}$ is bounded in $L^{\infty},$ uniformly with respect to $n.$
We conclude that $\{y^{n}:n\in\mathbb{N}\}$ is an equicontinuous family, and we apply the Arzela-Ascoli theorem
to find a uniformly convergent subsequence, which we do not relabel.  We call the limit $y.$

We now address regularity of the limit.  The Arzela-Ascoli theorem gives 
convergence in $C([0,T]\times D),$ and this immediately implies convergence in $C([0,T];L^{2}(D)).$
With the uniform bound in $H^{s+1}$ from Theorem \ref{uniformBoundTheorem}, Sobolev interpolation then 
implies convergence in $C([0,T];H^{s'+1}),$ for any $s'<s.$  Furthermore, since the iterates are uniformly bounded
in $H^{s+1},$ at every time $t\in[0,T]$ there is a weak limit in $H^{s+1}$ obeying the same bound, and this limit 
must again equal $y.$  Thus $y$ is also in $L^{\infty}([0,T];H^{s+1}).$

The argument for $g^{n}$ is the same, except that $g^{n}$ being bounded in $H^{s}$ rather than $H^{s+1}$ means
that we require $s\geq 4$ to have the $L^{\infty}$ bound on the time derivatives.  We call the limit of the subsequence
(which we do not relabel) $g.$  This $g$ is in $C([0,T];H^{s'})$ for any $s'<s,$ and also is in $L^{\infty}([0,T];H^{s}).$

We next take the limit of $f^{n}.$  From the uniform bounds of Theorem \ref{uniformBoundTheorem}, inspection
of \eqref{fIteratedEquation} implies that $(f^{n})'$ is uniformly bounded.  Thus $\{f^{n}:n\in\mathbb{N}\}$ is an 
equicontinuous family on $[0,T].$  The Arzela-Ascoli theorem again applies, yielding a uniform limit of a subsequence
(which we do not relabel); we call the limit $f.$

We turn now to the sequence $K^{n}.$  Considering \eqref{KIteratedDefinition} and the fact that $y^{n},$ $g^{n},$
and $f^{n}$ all converge uniformly, we see that $K^{n}$ converges to a limit $K$ which is given by
\begin{equation}\nonumber
K=\int\int g(H_{pp}(\psi(y+fw_{\infty})))(y+fw_{\infty})\ dadz,
\end{equation}
and this convergence is uniform.
Since we have $K^{n}\geq K_{data}/2$ for all $n,$ we also have $K\geq K_{data}/2$ for all times.

Finally we consider convergence of $r^{n}.$  In light of \eqref{rIterated} and since we know that $K^{n}$ converges,
we only need to consider the convergence of $P^{n}.$  The convergence that we have established for $y^{n}$
implies that up through second derivatives of $y^{n}$ converge to the appropriate derivatives of $y.$ 
Similarly, up to second derivatives of $g^{n}$ converge uniformly to the 
appropriate derivatives of $g.$  This is enough regularity to ensure that $P^{n}=P_{c}[y^{n},f^{n},g^{n}]$ converges
uniformly to $P_{c}[y,f,g].$  Since $P^{n}$ and $K^{n}$ both converge, we see that $r^{n}$ converges to
$r=P/K,$ and this convergence is uniform.

We next demonstrate that the limits $y$ and $g$ satisfy the appropriate equations.  We provide the details only for
$g,$ as the argument for $y$ is the same.  We integrate \eqref{gIteratedTransport} with respect to time, on the interval
$[0,t]:$
\begin{multline}\nonumber
g^{n+1}(t,\cdot)=g_{0,1/n}+\int_{0}^{t}
\frac{1}{2}\partial_{zz}(\sigma^{2}g^{n+1})-\partial_{z}(\mu g^{n+1})
-\partial_{a}(\chi(z+r^{n}a)g^{n+1})
\\
-\partial_{a}(\chi\Theta_{c}(y^{n},f^{n})g^{n+1})\ dt'.
\end{multline}
The uniform convergence of the iterates $y^{n},$ $g^{n},$ $f^{n},$ and $r^{n}$ implies convergence of the integral. 
Taking the limit, we have
\begin{equation}\nonumber
g(t,\cdot)=g_{0}+\int_{0}^{t}
\frac{1}{2}\partial_{zz}(\sigma^{2}g)-\partial_{z}(\mu g)
-\partial_{a}(\chi(z+ra)g)
-\partial_{a}(\chi\Theta_{c}(y,f)g)\ dt'.
\end{equation}
Taking the time derivative of this, we see that
\begin{equation}\label{gLimitCutoffs}
\partial_{t}g-\frac{1}{2}\partial_{zz}(\sigma^{2}g)+\partial_{z}(\mu g)+\partial_{a}(\chi(z+ra)g)
+\partial_{a}(\chi\Theta_{c}(y,f)g)=0.
\end{equation}
Similarly, we conclude that the equation satisfied by $y$ is
\begin{equation}\label{yLimitCutoffs}
\partial_{t}y+\frac{1}{2}\sigma^{2}y+\mu\partial_{z}y+(r-\rho)y
+(\chi(z+ra))\partial_{a}y+\chi\Theta_{c}(y,f)\partial_{a}y=0.
\end{equation}

The last step in the existence proof is to remove the cutoff functions $\chi$ and $\psi.$
As discussed when the iterative scheme was set up, as long as the iterates remain uniformly bounded, and
as long as $T$ is small enough, the support of the iterates with respect to the $a$ variable 
remains within the set $A_{2}.$  Since the iterates
converge uniformly, the support of $y$ and $g$ with respect to the $a$ variable 
also remains confined to the set $A_{2}$ throughout the interval
$[0,T].$  Since the cutoff function satisfies $\chi=1$ when restricted to $A_{2}$, we see that in \eqref{gLimitCutoffs} and
\eqref{yLimitCutoffs}, we have $\chi g=g$ and $\chi\partial_{a}y=\partial_{a}y.$
Since \eqref{WInductiveHypothesis} is satisfied for all $n$ and since $y^{n}$ and $f^{n}$ converge to $y$ and $f,$
we see that
\begin{equation}\nonumber
\min_{(t,a,z)\in[0,T]\times D}\left(y(t,a,z)+f(t)w_{\infty}\right)\geq\frac{W}{2}.
\end{equation}
This implies $\psi(y+fw_{\infty})=y+fw_{\infty},$ and therefore that $\Theta_{c}(y,f)=\Theta(y,f).$
We conclude that the equations satisfied by $y$ and $g$ are \eqref{yFinalEquation} and
\eqref{gFinalEquation}, as desired.

The proof of the existence theorem is complete.
\end{proof}

\section{Uniqueness}\label{uniquenessSection}

We now prove uniqueness of our solutions.

\begin{theorem}\label{mainUniquenessTheorem} 
Let $s\in\mathbb{N}$ satisfying $s\geq 4$ be given, and let $w_{\infty}>0$ be given.  
Let $A>0$ be given and let the spatial domain $D$ be as above.
Let $y_{T}\in H^{s+1}(D)$ and $g_{0}\in H^{s}(D)$ be given, such that the support of $g_{0}$ with respect to $a$
and the support of $y_{T}$ with respect to $a$ are in the interior of the interval $[-A,A],$ and assume that $g_{0}$ is a 
probability measure.  Assume $W>0,$ where $W$ is defined by \eqref{dataW}.
Let $(y_{1},g_{1},f_{1})$ and $(y_{2},g_{2},f_{2})$ and the associated interest rates
$r_{i}=P[y_{i},g_{i},f_{i}]/K[y_{i},g_{i},f_{i}]$ satisfy
\eqref{yFinalEquation}, \eqref{gFinalEquation}, \eqref{fFinalEquation}, with data 
$g_{i}(0,\cdot)=g_{0},$ $y_{i}(T,\cdot)=y_{T},$ and $f_{i}(T)=1.$
Let $T>0$ be such that $y_{i}\in L^{\infty}([0,T];H^{s+1}(D))\cap C([0,T];H^{s'+1}(D)),$ for all $s'<s,$
and such that $g_{i}\in L^{\infty}([0,T];H^{s}(D))\cap C([0,T];H^{s'}(D)),$ for all $s'<s,$ and such
that $f_{i}\in C^{1}([0,T]).$  Assume that $g_{i}$ and $y_{i}$ are compactly supported with respect to the $a$
variable in the interval $(-2A,2A).$
There exists $T_{***}$ such that if $T\in(0,T_{***}),$ then $(y_{1},g_{1},f_{1})=(y_{2},g_{2},f_{2}).$
\end{theorem}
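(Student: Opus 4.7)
The plan is to prove uniqueness via an energy estimate adapted to the forward-backward structure, in the same spirit as the proof of Theorem \ref{uniformBoundTheorem}. Introduce the differences $\tilde{y}=y_{1}-y_{2}$, $\tilde{g}=g_{1}-g_{2}$, $\tilde{f}=f_{1}-f_{2}$, and $\tilde{r}=r_{1}-r_{2}$. By hypothesis $\tilde{g}(0,\cdot)\equiv 0$, $\tilde{y}(T,\cdot)\equiv 0$, and $\tilde{f}(T)=0$. Subtracting the two copies of \eqref{yFinalEquation}, \eqref{gFinalEquation}, and \eqref{fFinalEquation} yields linear evolution equations for $\tilde{y}$ and $\tilde{g}$ whose coefficients depend on $(y_{1},g_{1},f_{1},r_{1})$ and whose source terms involve $\tilde{r}$, $\tilde{f}$, and the difference $\Theta(y_{1},f_{1})-\Theta(y_{2},f_{2})$ multiplied by derivatives of $y_{2}$ or $g_{2}$; the smoothness of $\Theta$ together with the $H^{s}$ bounds on the $y_{i}$ and $f_{i}\in[\tfrac{1}{2},2]$ (and a mean value argument) lets each such nonlinear difference be controlled by a linear combination of $\tilde{y}$ and $\tilde{f}$.

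The next step is a pointwise-in-time Lipschitz bound for the interest rate. Because the lower bound $K[y_{i},g_{i},f_{i}]\geq K_{\mathrm{data}}/2$ transfers to the solutions (by the same short-time continuity argument used in Theorem \ref{uniformBoundTheorem}), and because the expressions \eqref{definitionOfP} and the definition of $K$ are polynomial in the arguments involving at most second spatial derivatives, one obtains
\begin{equation*}
|\tilde{r}(t)|\leq C\bigl(\|\tilde{y}(t)\|_{H^{2}}+\|\tilde{g}(t)\|_{H^{2}}+|\tilde{f}(t)|\bigr).
\end{equation*}
The ODE for $\tilde{f}$, namely $\tilde{f}'+(r_{1}-\rho)\tilde{f}=-\tilde{r}f_{2}$ with terminal value zero, then yields $|\tilde{f}(t)|\leq CT\sup_{[0,T]}|\tilde{r}|$ by integrating backward from $T$.

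The core of the argument is an $H^{2}$ energy estimate for $\tilde{g}$ forward in time and for $\tilde{y}$ backward in time, carried out exactly as in Section \ref{existenceAndBounds} but at a lower regularity. Integration by parts, the vanishing of $\sigma$ and $\mu$ at $z_{\min}$ and $z_{\max}$, and the compact support of each $y_{i},g_{i}$ in the $a$-variable eliminate all boundary contributions; the remaining terms are bounded using the $L^{\infty}$ estimates on the coefficients and their first two derivatives that follow from the assumed $H^{s}$ regularity with $s\geq 4$ and Sobolev embedding. The resulting differential inequalities take the form
\begin{equation*}
\frac{d}{dt}\|\tilde{g}\|_{H^{2}}^{2}\leq C\|\tilde{g}\|_{H^{2}}^{2}+C\bigl(\|\tilde{y}\|_{H^{2}}^{2}+|\tilde{f}|^{2}\bigr),\qquad -\frac{d}{dt}\|\tilde{y}\|_{H^{2}}^{2}\leq C\|\tilde{y}\|_{H^{2}}^{2}+C\bigl(\|\tilde{g}\|_{H^{2}}^{2}+|\tilde{f}|^{2}\bigr).
\end{equation*}
Integrating forward from $t=0$ for $\tilde{g}$ and backward from $t=T$ for $\tilde{y}$, using that both differences vanish at those times, and combining with the bound on $\tilde{f}$, I set $\Phi(T):=\sup_{[0,T]}\bigl(\|\tilde{y}\|_{H^{2}}^{2}+\|\tilde{g}\|_{H^{2}}^{2}+|\tilde{f}|^{2}\bigr)$ and arrive at an inequality of the form $\Phi(T)\leq C'T\,\Phi(T)$. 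Choosing $T_{***}$ so that $C'T_{***}<1$ forces $\Phi=0$, hence $(y_{1},g_{1},f_{1})=(y_{2},g_{2},f_{2})$ and therefore also $r_{1}=r_{2}$.

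The main obstacle is matching the regularity at which $\tilde{r}$ can be controlled with the regularity at which the transport estimates are performed: the formula for $P$ contains second derivatives of both $y$ and $g$, so a naive $L^{2}$ energy estimate would not allow one to absorb source terms like $\tilde{r}\,\partial_{a}y_{2}$ or $\partial_{a}(\tilde{r}a\,g_{2})$ into the Gronwall argument. Performing the estimate at the $H^{2}$ level is what makes the coupling close; verifying that no further derivatives leak out requires careful bookkeeping of the commutators arising when differentiating the transport equations twice, but this is entirely parallel to the analysis already carried out in Theorem \ref{uniformBoundTheorem}.
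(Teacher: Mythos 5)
Your overall strategy is the same as the paper's: form the differences, bound the coupling quantity $r_{1}-r_{2}$ pointwise in terms of the difference energies, integrate the $\tilde{g}$ estimate forward from $t=0$ and the $\tilde{y}$ estimate backward from $t=T$ where the respective data vanish, close with a supremum-in-time bound involving a factor of $T$, and take $T$ small. That part is fine. However, there is a genuine regularity gap in your choice of norms.

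You propose to estimate both $\tilde{g}$ and $\tilde{y}$ in $H^{2}$. But the $g$-equation contains $\partial_{a}(g\Theta(y,f))$, so the difference equation has the source $\partial_{a}\bigl(g_{2}(\Theta(y_{1},f_{1})-\Theta(y_{2},f_{2}))\bigr)$, and $\Theta(y_{1},f_{1})-\Theta(y_{2},f_{2})$ is (by the mean value theorem) a smooth coefficient times $\tilde{y}+\tilde{f}w_{\infty}$. In an $H^{m}$ estimate for $\tilde{g}$, applying $\partial^{\alpha}$ with $|\alpha|=m$ to this source and then the outer $\partial_{a}$ produces, at top order, $m+1$ derivatives on $\tilde{y}$; integrating by parts simply trades one derivative from $\tilde{y}$ for one on $\tilde{g}$, which is equally bad. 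So an $H^{2}$ estimate for $\tilde{g}$ requires control of $\|\tilde{y}\|_{H^{3}}$, not $\|\tilde{y}\|_{H^{2}}$. Your proposed differential inequality $\frac{d}{dt}\|\tilde{g}\|_{H^{2}}^{2}\leq C(\|\tilde{g}\|_{H^{2}}^{2}+\|\tilde{y}\|_{H^{2}}^{2}+|\tilde{f}|^{2})$ therefore does not hold as written, and the Gronwall loop does not close. This same derivative asymmetry is built into the existence hierarchy of the paper ($y_{T}\in H^{s+1}$ against $g_{0}\in H^{s}$), and you need to respect it in the uniqueness estimate as well, e.g.\ $H^{m}$ for $\tilde{g}$ paired with $H^{m+1}$ for $\tilde{y}$.

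The paper avoids the issue by working at the lowest possible level: $E_{d,g}$ is the $L^{2}$ norm of $g_{1}-g_{2}$ and $E_{d,y}$ is the $L^{2}$ norm of $\nabla_{a,z}(y_{1}-y_{2})$ (with compact support in $a$ supplying the $L^{2}$ control of $\tilde{y}$ itself via Poincar\'e). The price is that $P_{1}-P_{2}$ then appears to need two derivatives of the differences, which is out of reach at that level; the paper resolves this not by raising the regularity of the energy, as you propose, but by integrating by parts in the $\Upsilon_{II,3}$, $\Upsilon_{II,4}$, $\Upsilon_{II,6}$ pieces of $P_{1}-P_{2}$ so that the extra derivatives land on the known, smooth solutions $y_{i},g_{i}\in H^{s}$ rather than on the differences. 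Your discussion of the ``main obstacle'' identifies the right tension but misidentifies the resolution: the integration by parts in $P_{1}-P_{2}$, not a bump in the energy regularity, is what makes the coupling close. If you prefer to avoid that integration by parts, the consistent alternative is the asymmetric pairing ($H^{2}$ for $\tilde{g}$, $H^{3}$ for $\tilde{y}$), which is available because the solutions have the required $H^{s}$, $H^{s+1}$ regularity with $s\geq 4$, but it is more work than the paper's estimate.
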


\begin{proof}
By arguments such as those in \cite{constantinEscher} we see that the evolution for $\partial_{a}v=y+fw_{\infty}$
is positivity preserving, and thus we do not need to assume an explicit lower bound for $y+fw_{\infty}$ over the
given interval $[0,T].$  Similarly we could dispense with the explicit bound on the support with respect to the $a$ 
variable, but we do state it here so as to keep the domain consistent with the solutions we have already proved
to exist.

We define three components for the energy for the difference of two solutions, called
$E_{d,g},$ $E_{d,y},$ and $E_{d,f},$ where
\begin{equation}\nonumber
E_{d,g}(t)=\frac{1}{2}\int\int(g_{1}-g_{2})^{2}\ dadz,
\end{equation}
\begin{equation}\nonumber
E_{d,y}(t)=\frac{1}{2}\int\int|\nabla_{a,z}(y_{1}-y_{2})|^{2}\ dadz,
\end{equation}
and
\begin{equation}\nonumber
E_{d,f}=\sup_{t\in[0,T]}\frac{1}{2}|f_{1}(t)-f_{2}(t)|^{2}.
\end{equation}
Note that $E_{d,g}(0)=0$ and that $E_{d,y}(T)=0.$

We start by estimating $E_{d,f}.$  Noting that for $i\in\{1,2\}$ we have the equations 
$f_{i}'=(\rho-r_{i})f_{i},$ and that $f_{i}(T)=1,$ we can write, for any $t\in[0,T],$
\begin{equation}\nonumber
\frac{1}{2}|f_{1}(t)-f_{2}(t)|^{2}=-\int_{t}^{T}(f_{1}(t')-f_{2}(t'))(f_{1}'(t')-f_{2}'(t'))\ dt'.
\end{equation}
Substituting from the equations for $f_{i}'$ and adding and subtracting, this becomes
\begin{equation}\nonumber
\frac{1}{2}|f_{1}(t)-f_{2}(t)|=-\int_{t}^{T}(f_{1}-f_{2})(r_{2}-r_{1})f_{1}\ dt'
-\int_{t}^{T}(f_{1}-f_{2})^{2}(\rho-r_{2})\ dt'.
\end{equation}
Taking the supremum with respect to time and performing some other manipulations, we can bound this as
\begin{equation}\label{EdfIntermediate}
E_{d,f}\leq cTE_{d,f}+cT|r_{1}-r_{2}|_{L^{\infty}}^{2}.
\end{equation}

We will now work with $r_{1}-r_{2}.$  Since $r_{i}=P_{i}/K_{i},$ it is clear that at any time $r_{1}-r_{2}$ can be bounded
in terms of $K_{1}-K_{2}$ and $P_{1}-P_{2}.$  We consider $K_{1}-K_{2}$ first.  For any $t\in[0,T],$ we have
\begin{multline}\nonumber
|K_{1}(t)-K_{2}(t)|=\Bigg|\int\int g_{1}(H_{pp}(y_{1}+f_{1}w_{\infty}))(y_{1}+f_{1}w_{\infty})\ dadz
\\
-\int\int g_{2}(H_{pp}(y_{2}+f_{2}w_{\infty})(y_{2}+f_{2}w_{\infty})\ dadz\Bigg|.
\end{multline}
After some adding and subtracting and using a Lipschitz estimate for $H_{pp},$ it is evident that this can be bounded by
\begin{equation}\label{KDifferenceAtT}
|K_{1}(t)-K_{2}(t)|\leq c(E_{g}^{1/2}+E_{y}^{1/2}+E_{f}^{1/2}).
\end{equation}
We will need \eqref{KDifferenceAtT} as well as the supremum of this with respect to time,
\begin{equation}\label{KDifferenceSup}
|K_{1}-K_{2}|_{L^{\infty}}\leq c\left(\sup_{t\in[0,T]}\left(E_{g}^{1/2}+E_{y}^{1/2}\right)+E_{f}^{1/2}\right).
\end{equation}

The difference $P_{1}-P_{2}$ is similar but slightly more involved, as we must integrate by parts in some instances.
We start by noting that the definition \eqref{gFinalEquation} of $P$ includes three terms, so we decompose $P_{1}-P_{2}$
as
\begin{equation}\nonumber
P_{1}-P_{2}=\Upsilon_{I}+\Upsilon_{II}+\Upsilon_{III}.
\end{equation}
We believe the meaning here is clear, and we will only write out $\Upsilon_{II}$ in detail.
We add and subtract to decompose $\Upsilon_{II}$ as
\begin{equation}\nonumber
\Upsilon_{II}=\sum_{j=1}^{7}\Upsilon_{II,j},
\end{equation}
where we have the following definitions:
\begin{multline}\nonumber
\Upsilon_{II,1}=\int\int(g_{1}-g_{2})H_{pp}(y_{1}+f_{1}w_{\infty})\cdot
\\
\cdot\left(-\frac{1}{2}\sigma^{2}\partial_{zz}y_{1}-\mu\partial_{z}y_{1}-H_{p}(y_{1}+f_{1}w_{\infty})\partial_{a}y_{1}
+\rho(y_{1}+f_{1}w_{\infty})\right)\ dadz,
\end{multline}
\begin{multline}\nonumber
\Upsilon_{II,2}=\int\int g_{2}\left[H_{pp}(y_{1}+f_{1}w_{\infty})-H_{pp}(y_{2}+f_{2}w_{\infty})\right]\cdot
\\
\cdot\left(-\frac{1}{2}\sigma^{2}\partial_{zz}y_{1}-\mu\partial_{z}y_{1}-H_{p}(y_{1}+f_{1}w_{\infty})\partial_{a}y_{1}
+\rho(y_{1}+f_{1}w_{\infty})\right)\ dadz,
\end{multline}
\begin{equation}\nonumber
\Upsilon_{II,3}=\int\int g_{2}H_{pp}(y_{2}+f_{2}w_{\infty})
\left(-\frac{1}{2}\sigma^{2}\partial_{zz}(y_{1}-y_{2})\right)\ dadz,
\end{equation}
\begin{equation}\nonumber
\Upsilon_{II,4}=\int\int g_{2}H_{pp}(y_{2}+f_{2}w_{\infty})
\left(-\mu\partial_{z}(y_{1}-y_{2})\right)\ dadz,
\end{equation}
\begin{equation}\nonumber
\Upsilon_{II,5}=\int\int g_{2}H_{pp}(y_{2}+f_{2}w_{\infty})
\left(-H_{p}(y_{1}+f_{1}w_{\infty})+H_{p}(y_{2}+f_{2}w_{\infty})\right)\partial_{a}y_{1}\ dadz,
\end{equation}
\begin{equation}\nonumber
\Upsilon_{II,6}=\int\int g_{2}H_{pp}(y_{2}+f_{2}w_{\infty})
\left(-H_{p}(y_{2}+f_{2}w_{\infty})\partial_{a}(y_{1}-y_{2})\right)\ dadz,
\end{equation}
and
\begin{equation}\nonumber
\Upsilon_{II,7}=\int\int g_{2}H_{pp}(y_{2}+f_{2}w_{\infty})\rho(y_{1}-y_{2}+(f_{1}-f_{2})w_{\infty})\ dadz.
\end{equation}
It is immediate that $\Upsilon_{II,1},$ $\Upsilon_{II,2},$ $\Upsilon_{II,5},$ and $\Upsilon_{II,7}$ may be bounded
in terms of $E_{d,g},$ $E_{d,y},$ and $E_{d,f};$ note that Lipschitz estimates for $H_{p}$ and $H_{pp}$ are needed,
but these are smooth functions and the Lipschitz estimates are thus available.  The terms $\Upsilon_{II,3}$ and
$\Upsilon_{II,4}$ may be bounded in terms of $E_{d,y}$ after integrating by parts with respect to the $z$ variable;
note that there are no boundary terms because of the properties of the diffusion and transport coefficients, $\sigma$ 
and $\mu,$ at the boundaries of the domain.  The term $\Upsilon_{II,6}$ can be bounded in terms
of $E_{d,y}$ after an integration by parts with respect to the $a$ variable; there is no boundary term because
solutions are compactly supported with respect to the $a$ variable.
We omit further details, and the result of these and similar considerations
is the bounds
\begin{equation}\nonumber
|P_{1}(t)-P_{2}(t)|\leq c(E_{g}^{1/2}+E_{y}^{1/2}+E_{f}^{1/2}),
\end{equation}
and 
\begin{equation}\nonumber
|P_{1}-P_{2}|_{L^{\infty}}\leq c\left(\sup_{t\in[0,T]}\left(E_{g}^{1/2}+E_{y}^{1/2}\right)+E_{f}^{1/2}\right).
\end{equation}

From our bounds on differences of $K$ and $P,$ we conclude that at each $t\in[0,T],$
\begin{equation}\nonumber
|r_{1}(t)-r_{2}(t)|\leq c\left(E_{g}^{1/2}+E_{y}^{1/2}+E_{f}^{1/2}\right),
\end{equation}
and taking the supremum in time,
\begin{equation}\label{rDifferenceBound}
|r_{1}(t)-r_{2}(t)|_{L^{\infty}}\leq c\left(\sup_{t\in[0,T]}\left(E_{g}^{1/2}+E_{y}^{1/2}\right)+E_{f}^{1/2}\right).
\end{equation}
Using this in \eqref{EdfIntermediate}, our bound for $E_{d,f}$ is
\begin{equation}\label{EdfFinal}
E_{d,f}\leq cT\left(\sup_{t\in[0,T]}\left(E_{d,g}+E_{d,y}\right)+E_{d,f}\right).
\end{equation}

We next establish that there exists $c>0$ such that
\begin{equation}\label{uniqToIntegrate1}
\frac{dE_{d,g}}{dt}\leq c(E_{d,g}+E_{d,y}+E_{d,f}),
\end{equation}
and
\begin{equation}\label{uniqToIntegrate2}
\frac{dE_{d,y}}{dt}\leq c(E_{d,g}+E_{d,y}+E_{d,f}).
\end{equation}
To this end, we take the time derivative of $E_{d,g}:$
\begin{equation}\nonumber
\frac{dE_{d,g}}{dt}=\int\int(g_{1}-g_{2})\partial_{t}(g_{1}-g_{2})\ dadz.
\end{equation}
We then substitute from the equations \eqref{gFinalEquation} satisfied by each $g_{i},$ and
add and subtract:
\begin{multline}\nonumber
\frac{dE_{d,g}}{dt}=\frac{1}{2}\int\int(g_{1}-g_{2})\partial_{zz}(\sigma^{2}(g_{1}-g_{2}))\ dadz
\\
-\int\int(g_{1}-g_{2})\partial_{z}(\mu(g_{1}-g_{2}))\ dadz
-\int\int(g_{1}-g_{2})\partial_{a}((r_{1}-r_{2})ag_{1})\ dadz
\\
-\int\int(g_{1}-g_{2})\partial_{a}((z+r_{2}a)(g_{1}-g_{2}))\ dadz
\\
-\int\int(g_{1}-g_{2})\partial_{a}((g_{1}-g_{2})\Theta(y_{1},f_{1}))\ dadz
\\
-\int\int(g_{1}-g_{2})\partial_{a}(g_{2}(\Theta(y_{1},f_{1})-\Theta(y_{2},f_{2})))\ dadz.
\end{multline}
There are six terms on the right-hand side, and estimating these is very much like the estimate for $g^{n+1}$ in
the proof of Theorem \ref{uniformBoundTheorem}.  Specifically, for the first term, the two derivatives with respect
to $z$ should be applied, and then some integrations by parts can be made.  For the second term, the one
derivative with respect to $z$ can be applied, and then an integration by parts can be made.  To estimate the third term, 
the bound \eqref{rDifferenceBound} is employed.  For the fourth and fifth terms, the derivative with respect to $a$ 
should be applied, and then an integration by parts can be made.  For the sixth term, the derivative with respect
to $a$ should be applied, a Lipschitz estimate for $\Theta$ is used, and a further addition and subtraction can be 
utilized as well.  We omit further details.

Integrating \eqref{uniqToIntegrate1} forward in time, and using the initial data, we find
\begin{multline}\label{uniqToAdd1}
E_{d,g}(t)\leq c\int_{0}^{t}E_{d,g}(t')+E_{d,y}(t')+E_{d,f}\ dt'
\\
\leq cT\left(\sup_{t\in[0,T]}\left(E_{d,g}(t)+E_{d,y}(t)\right)+E_{d,f}\right).
\end{multline}
Integrating \eqref{uniqToIntegrate2} backward in time, and using the terminal data, we find
\begin{multline}\label{uniqToAdd2}
E_{d,y}(t)\leq c\int_{t}^{T}E_{d,g}(t')+E_{d,y}(t')+E_{d,f}\ dt'
\\
\leq cT\left(\sup_{t\in[0,T]}\left(E_{d,g}(t)+E_{d,y}(t)\right)+E_{d,f}\right).
\end{multline}
Adding \eqref{EdfFinal}, 
\eqref{uniqToAdd1}, and \eqref{uniqToAdd2}, and taking the supremum in time and reorganizing terms,
we find
\begin{equation}\nonumber
(1-cT)\left(E_{d,f}+\sup_{t\in[0,T]}\left(E_{d,g}(t)+E_{d,y}(t)\right)\right)\leq 0.
\end{equation}
Thus as long as $0<T<1/c,$ we have $y_{1}=y_{2},$  $g_{1}=g_{2},$ and $f_{1}=f_{2}.$
\end{proof}

\section{Discussion}\label{discussionSection}

We mentioned above that we would remark again on the difference between choice of spatial domain here 
as compared to \cite{mollPhilTrans}.  We have taken the same domain with respect to the $z$ variable, but 
in \cite{mollPhilTrans} the domain with respect to the $a$ variable was taken to be $[a_{min},\infty)$ for a given
$a_{min}<0.$  We have instead taken the initial support of our functions with respect to the $a$ variable
in $[-A,A]$ for a given $A>0$ and the support of our solutions has remained in $[-2A,2A]$ over the time interval
$[0,T].$  If we take the view that $a_{min}<-2A,$ then our solutions fit into the framework of \cite{mollPhilTrans}
with regard to this aspect.

We also mentioned above that we would comment on our assumption that the range of $u'$ is equal to 
$(0,\infty),$ as would be the case, for instance, if $u(c)=\sqrt{c}.$  This assumption is only for simplicity
and the general case can be treated by our same method.  We stated in Section \ref{formulationSection}
that the quantity in the definition of the Hamiltonian is maximized when $p=u'(c),$ and so $c=(u')^{-1}(p).$
This formula is still valid if $p$ is in $\mathrm{Range}(u'),$ which is necessarily an interval.  Thus the formula
we have used throughout the work is valid for values of $p$ in a given interval.  But $p$ stands in for $\partial_{a}v,$
and the method we have applied does find solutions where $\partial_{a}v=y+fw_{\infty}$ only takes values in a given
interval.  For a general utility function, the terminal data $y_{T}+w_{\infty}$ can be taken
with values in the appropriate interval, and the time horizon $T$ can be taken sufficiently small so that solutions
remain in this interval.

We have discussed in the introduction that we can show that in some cases, the solutions we have proved to exist
are not solutions of the original system.  For every solution we have proved to exist via Theorem 
\ref{mainExistenceTheorem}, there is
associated a value of the constant $\mathcal{Q}.$  If this constant $\mathcal{Q}$ is nonzero, 
then the solution does not solve the original problem, i.e., if $\mathcal{Q}\neq0,$ then $\mathcal{C}\not\equiv0.$
It is straightforward to see that we can guarantee in some cases that $\mathcal{Q}\neq0.$  We define
\begin{equation}\nonumber
\mathcal{Q}_{data}=
\int\int (z+H_{p}(y_{T}+w_{\infty}))g_{0}\ dadz.
\end{equation}
Assume that $g_{0},$ $y_{T},$ and $w_{\infty}$ are specified such that $\mathcal{Q}_{data}\neq0.$  
Then for sufficiently small
values of $T>0,$ for the solutions $(y,g,f)$ proved to exist in our main theorem, the solution will not vary much 
from the data.  Therefore for small values of $T,$ we will have $\mathcal{Q}$ close to 
$\mathcal{Q}_{data},$ and $\mathcal{Q}$ will therefore be
nonzero.  

As mentioned in the introduction, the authors of \cite{mollPhilTrans} proposed a restriction on the choice of 
terminal values for $v,$ and thus in our case for the terminal data for $\partial_{a}v,$ which is
$y_{T}+w_{\infty}.$  In particular they proposed that $T$ should
be taken to be fairly large and the final value of $v$ should be associated to a stationary solution.  Since a stationary
solution can be viewed as the infinite-$T$ limit of solutions of the system under consideration, and stationary
solutions would satisfy $\mathcal{C}=0,$ this proposed data would be expected to yield solutions satisfying only
$\mathcal{C}\approx 0.$  Further work is warranted, though, to find solutions which satisfy the constraint exactly.
Specifically, given a value of the time horizon $T$ and the initial distribution $g_{0}$ initially satisfying the constraint, 
the author intends to 
perform computational and analytical studies seeking existence of
terminal data $y_{T}+w_{\infty}$ which yield $\mathcal{Q}=\mathcal{C}=0.$

\section*{Acknowledgments}
The author gratefully acknowledges support from the National Science Foundation through 
grant  DMS-1515849.  

\section*{Conflict of Interest}
Conflict of Interest: The authors declare that they have no conflict of interest.

\bibliography{ambroseMeanField}{}
\bibliographystyle{plain}

\end{document}